%
%
%
%
%
%
\documentclass{svjour3}                     
\smartqed  
\usepackage[CJKbookmarks=true,
            bookmarksnumbered=true,
            bookmarksopen=true,
            colorlinks=true,
            citecolor=blue,
            linkcolor=blue,
            anchorcolor=green,
            urlcolor=blue
            ]{hyperref}
\usepackage{hyperref}
\usepackage{color}

\usepackage{threeparttable}
\usepackage{fmtcount}  
\usepackage{amssymb}
\usepackage{amsmath}
\usepackage{bm}
\usepackage{booktabs}
\usepackage{graphicx}
\usepackage{subfigure}
\usepackage[latin1]{inputenc}
\usepackage[shortlabels]{enumitem}
\usepackage[numbers]{natbib}

%

\newcommand{\ud}{\mathrm{d}}
%
\journalname{Foundations of Computational Mathematics}
\begin{document}

\title{Minima distribution for global optimization}

\titlerunning{Minima distribution}        

\author{Xiaopeng Luo\thanks{The current version of this paper only discusses the theory of minima distribution; the relevant algorithm in the previous versions was separated and will be further expanded into a new upcoming paper.}}



\institute{X. Luo \at
              Department of Chemistry, Princeton University, Princeton, NJ 08544, USA \\
              \email{luo.permanent@gmail.com; xiaopeng@princeton.edu}
}

\date{}

\maketitle

\begin{abstract}
  This paper establishes a strict mathematical relationship between an arbitrary continuous function on a compact set and its global minima, like the well-known first order optimality condition for convex and differentiable functions. By introducing a class of nascent minima distribution functions that is only related to the target function and the given compact set, we construct a sequence that monotonically converges to the global minima on that given compact set. Then, we further consider some various sequences of sets where each sequence monotonically shrinks from the original compact set to the set of all global minimizers, and the shrink rate can be determined for continuously differentiable functions. Finally, we provide a different way of constructing the nascent minima distribution functions.

\keywords{Global optimization \and Optimality condition \and Minima distribution}
\subclass{Primary 90C26 \and Secondary 90C30}

\end{abstract}

\section{Introduction}
\label{MD:s1}

Given a possibly highly nonlinear and non-convex continuous function $f:\Omega\subset\mathbb{R}^n\to\mathbb{R}$ with the global minima $f^*$ and the set of all global minimizers $X^*$ in $\Omega$, we consider the optimality condition for the constrained optimization problem
\begin{equation}\label{MD:eq:COP}
  \min_{x\in\Omega}f(x),
\end{equation}
that is, what the clear relationship between $f$ and $f^*$ (or $X^*\subseteq\Omega$)? Here, $\Omega$ is a (not necessarily convex) compact set defined by inequalities $g_i(x)\geqslant0, i=1,\cdots,r$.

Generally, finding an arbitrary local minima is relatively straightforward by using classical local algorithms; however, finding the global minima is much more difficult. According to the degree of utilization of the global prior information coming from previous function evaluations, all the existing global algorithms could be roughly divided into three categories. First, there are those with almost no use of prior information. A typical representative is the so-called multistart algorithm based on the idea of performing parallel local searches starting from multiple initial points \cite{BoenderC1982M_Multistart,RinnooyKan1987M_Multistart1, RinnooyKan1987M_Multistart2,ByrdR1990M_Multistart}. It is usually effective if the number of local minimizers of a target function is not large; however, one cannot see any overall landscape in the multistart algorithm since there is no information exchange between those parallel local searches. Actually, this is a common feature of the traditional random search methods \cite{SchumerM1968M_RSstep,SchrackG1976M_RSstep,SheelaB1979M_RSstep, MasriS1980M_ASR} that appeared in the 1950s \cite{AndersonR1953M_RS, BrooksS1958M_RS,RastriginL1960M_RS,RastriginL1963A_RS,MutseniyeksV1964A_RS}.

Second, there are those with only partial use of prior information, including many heuristic algorithms. Central to these methods is a strategy that generates variations of a set of candidates (often called a population), and the information exchange among population members happens to be the focus of attention. The best known of these are genetic algorithms (GA) \cite{GoldbergD1989_GA,MitchellM1996_GA}, evolution strategies (ES) \cite{RechenbergI1973_ES,SchwefelH1995_ES}, differential evolution (DE) \cite{StornR1997M_DE,PriceK2005_DE,DasS2011S_DE, DasS2016S_DE} and so forth \cite{YangXS2014B_NIOA}. DE usually performs well for continuous optimization problems \cite{PriceK1996M_DE96,StornR1997M_DE,DasS2011S_DE,DasS2016S_DE} although does not guarantee that an optimal solution is ever found. The evolution strategy of DE takes the difference of two randomly chosen candidates to perturb an existing candidate and accepts a new candidate under a greedy or annealing criterion \cite{KirkpatrickS1983M_SimulatedAnnealing}.

Finally, there are those with full use of prior information, such as Bayesian optimization (BO) \cite{MockusJ1974M_BO,MockusJ1978A_BO,JonesD1998M_BO, ShahriariB2016_BO}. The BO method is to treat a target as a random function with a prior distribution and applies Bayesian inference to update the prior according to the previous function observations. This updated prior is used to construct an acquisition function to determine the next candidate. The acquisition function, which trade-offs exploration and exploitation, can be of different types, such as probability of improvement (PI) \cite{KushnerH1964_BO_PI}, expected improvement (EI) \cite{MockusJ1978A_BO,BullA2011_BO_EI_Conv} or lower confidence bound (LCB) \cite{CoxD1997M_BO_UCB}. BO is a sequential model-based approach and the model is often obtained using a Gaussian process (GP) \cite{JonesD1998M_BO, FloudasCA2008R_GlobalOptimization,RiosLM2013R_GlobalOptimization} which provides a normally distributed estimation of the target function \cite{RasmussenC2006_GP}.

Although most global methods try to extract as much knowledge as possible from prior information, the connection between the entire landscape of a target function and its global minima is not yet sufficiently clear and precise. Specifically, there is currently the lack of such an essential mathematical relationship; in contrast, there is a well-known relationship between a differentiable convex function and its minima, which is established by the gradient. Suppose $f$ is differentiable on a convex set $D$, then $f$ is convex if and only if
\begin{equation*}
  f(y)\geqslant f(x)+\nabla f(x)(y-x),~~\forall x,y\in D;
\end{equation*}
see for example Ref. \cite{BoydS2004B_ConvexOptimization}. And this shows that $\nabla f(x^*)=0$ implies that $f(y)\geqslant f(x^*)$ holds for all $y\in D$, i.e., $x^*$ is a global minimizer of $f$ on $D$. The role of this essential correlation is self-evident in convex optimization.

This paper aims to establish a similar mathematical relationship between any continuous function $f$ on a compact set $\Omega\subset\mathbb{R}^n$ and its global minima $f^*$. As the main contributions of this paper, if $m^{(k)}(x)=\frac{e^{-kf(x)}}{\int_\Omega e^{-kf(t)}\ud t}$ for $k\in\mathbb{R}$, then we have
\begin{itemize}
\renewcommand{\labelitemi}{$\bullet$}
  \item It holds that
    \begin{equation}
      \lim_{k\to\infty}\int_\Omega f(x)m^{(k)}(x)\ud x=f^*,
    \end{equation}
    and
    \begin{equation}
      \lim_{k\to\infty}m^{(k)}(x)
       =\left\{\begin{array}{cl}
       \frac{1}{\mu(X^*)}, & x\in X^*, \\
       0, & x\in\Omega-X^*,
      \end{array}\right.
    \end{equation}
    where $\mu(X^*)$ is the $n$-dimensional Lebesgue measure of $X^*$ and $X^*$ is the set of all global minimizers on $\Omega$.
  \item If $f$ is not a constant function on $\Omega$, the monotonic relationship
    \begin{equation}
      \int_\Omega f(x)m^{(k)}(x)\ud x>
      \int_\Omega f(x)m^{(k+\Delta k)}(x)\ud x>f^*
    \end{equation}
    holds for all $k\in\mathbb{R}$ and $\Delta k>0$, which implies a series of monotonous containment relationships, for examples, it holds that
    \begin{equation}
     \Omega\!\supset\!D_f^{(k)}\!\supset\!D_f^{(k+\Delta k)}\!\supset\!X^*,
     ~\textrm{where}~D_f^{(k)}\!=\!\left\{x\!\in\!\Omega:f(x)\!\leqslant\!
     \int_\Omega\!f(t)m^{(k)}(t)\ud t\right\},
    \end{equation}
    and for all $k\geqslant0$ and $\Delta k>0$, it holds that
    \begin{equation}
     \Omega=D_0^{(0)}\!\supset\!D_0^{(k)}\!
     \supset\!D_0^{(k+\Delta k)}\!\supset\!X^*,
     ~\textrm{where}~D_0^{(k)}\!
     =\!\left\{x\!\in\!m^{(k)}(t)\geqslant1/\mu(\Omega)\right\}
    \end{equation}
  \item If $x^*$ is the unique global minimizer of $f$ in $\Omega$, then
    \begin{equation}
      \lim_{k\to\infty}\int_\Omega x\cdot m^{(k)}(x)\ud x=x^*;
    \end{equation}
    further, if $f(x-x^*)=\psi(\|x-x^*\|_2)$ is radial and $\psi$ is a non-decreasing function on $\mathbb{R}_+$, then for all $k\in\mathbb{R}$ and $\Delta k>0$, it holds that
    \begin{equation}
      \left\|\int_\Omega(x-x^*)m^{(k)}(x)\ud x\right\|_2\geqslant
      \left\|\int_\Omega(x-x^*)m^{(k+\Delta k)}(x)\ud x\right\|_2.
    \end{equation}
  \item Suppose $f\in C(\Omega)$ and $\Gamma_0^{(k)}=\{x\in\Omega: m^{(k)}(x)=1/\mu(\Omega)\}$. If $x\in\Gamma_0^{(k)}$ and $x$ moves to $x+\Delta x\in\Gamma_0^{(k+\Delta k)}$ when $k$ continuously increases to $k+\Delta k$, then
    \begin{equation}
      \lim_{\Delta k\to0}\frac{f(x)-f(x+\Delta x)}{\Delta k}
      =\frac{1}{k}\left(f(x)-\int_\Omega f(t)m^{(k)}(t)\ud t\right),
    \end{equation}
    and
    \begin{equation}
      \lim_{\Delta k\to0}\frac{\|\Delta x\|}{\Delta k}
      =\frac{1}{k\|\nabla f(x)\|}
      \left|f(x)-\int_\Omega f(t)m^{(k)}(t)\ud t\right|,
    \end{equation}
    where $\|\cdot\|$ is the Euclidean norm.
\end{itemize}

The remainder of the paper is organized as follows. In Sect. \ref{MD:s2}, the concept of minima distribution (MD) and relevant conclusions are fully built by introducing a class of nascent minima distribution functions. In Sect. \ref{MD:s3}, we consider some various sequences of sets where each sequence monotonically shrinks from the original compact set to the set of all global minimizers. In Sect. \ref{MD:s4}, we provide another different way of constructing the nascent minima distribution functions. And finally, we draw some conclusions in Sect. \ref{MD:s5}.

\section{Minima distribution}
\label{MD:s2}

To establish a mathematical relationship between $f$ and $f^*$, we hope to find an integrable distribution function $m_{f,\Omega}\in\mathcal{L}(\Omega)$ such that
\begin{equation*}
  f^*=\int_\Omega f(x)m_{f,\Omega}(x)\ud x
  ~~\textrm{and}~~m_{f,\Omega}(x)>0~\textrm{if}~x\in X^*,
  ~m_{f,\Omega}(x)=0~\textrm{if}~x\in\Omega-\!X^*.
\end{equation*}
Since the distribution $m_{f,\Omega}$ is closely linked to the minimization of function $f$ on $\Omega$, we call it a \emph{minima distribution} related to $f$ and $\Omega$. In the following, we will first introduce the nascent minima distribution function $m^{(k)}$ related to $f$ and $\Omega$ then define the minima distribution (MD) by a weak limit of $m^{(k)}$.

\subsection{Nascent minima distribution function}

Our motivation for introducing nascent minima distribution functions originated with a meaningful observation. Consider the function
\begin{equation}\label{MD:eq:motivation}
  \tau(x)=\frac{1}{f(x)-f^*+1},~x\in\Omega,
\end{equation}
then $\tau(x)=1$ for $x\in X^*$ while $0<\tau(x)<1$ for $x\in\Omega-X^*$; and then,
\begin{equation*}
  \lim_{k\to\infty}\tau^k(x)=
  \left\{\begin{array}{cc}
    1, & x\in X^*; \\
    0, & x\notin X^*.
  \end{array}\right.
\end{equation*}

\begin{figure}[!h]
\centering
\includegraphics[width=0.24\textwidth]{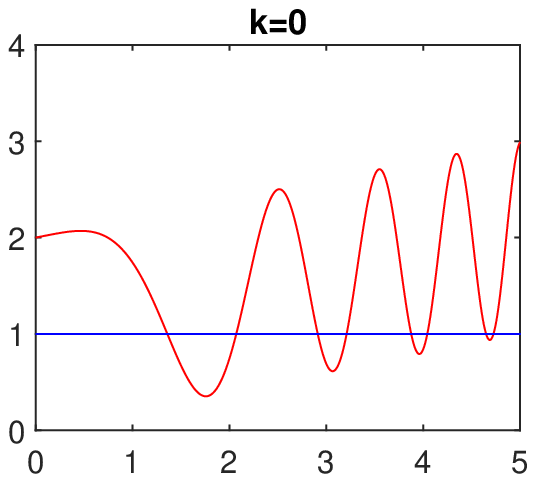}
\includegraphics[width=0.24\textwidth]{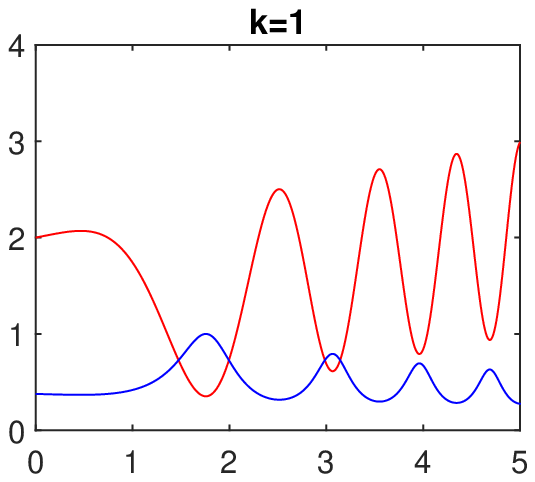}
\includegraphics[width=0.24\textwidth]{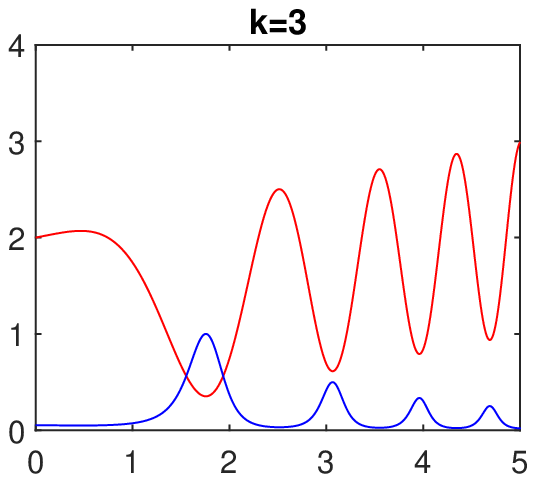}
\includegraphics[width=0.24\textwidth]{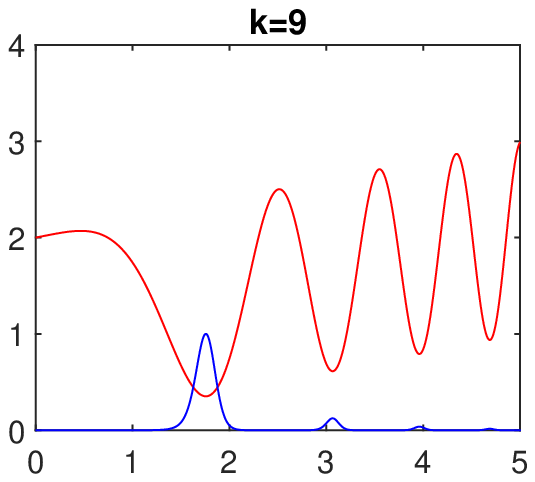}
\caption{One-dimensional illustration of the functions $\tau^k(x)=(f(x)-f^*+1)^{-k}$. The plots show $f(x)=\cos(x^2)+x/5+1,x\in[0,5]$ in red and the relevant $\tau^k$ with $k=0,1,3,9$ in blue.}
\label{MD:fig:motivation1}
\end{figure}

\begin{figure}[!h]
\centering
\subfigure[$f$]{\includegraphics[width=0.24\textwidth]{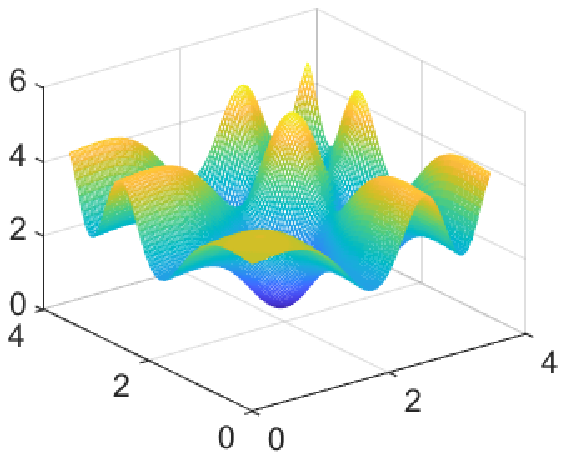}}
\subfigure[$\tau^1$]{\includegraphics[width=0.24\textwidth]{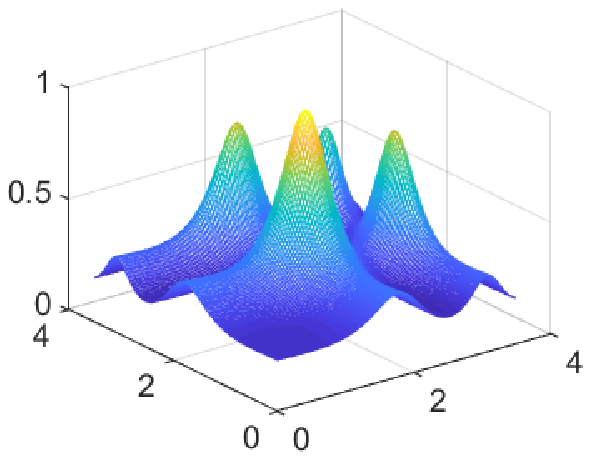}}
\subfigure[$\tau^3$]{\includegraphics[width=0.24\textwidth]{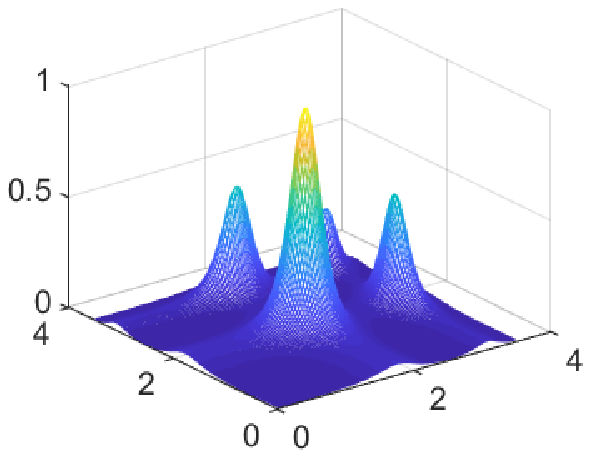}}
\subfigure[$\tau^9$]{\includegraphics[width=0.24\textwidth]{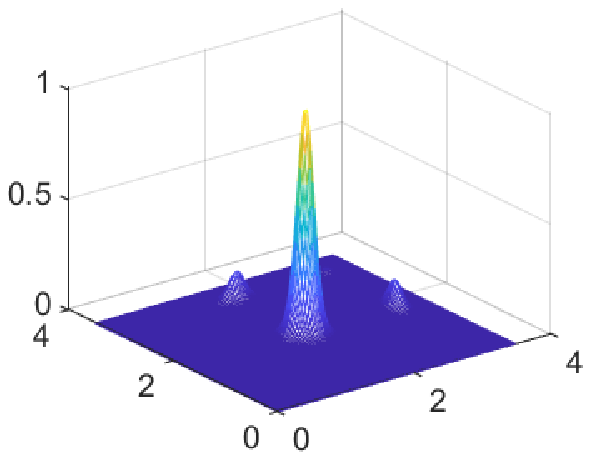}}
\caption{Two-dimensional illustration of the functions $\tau^k(x)=(f(x)-f^*+1)^{-k}$. The plot (a) shows the target function $f(x)=\cos(x_1^2)+\cos(x_2^2)+x_1/5+x_2/5+2,x=(x_1,x_2)\in[0,3.5]^2$, and the plots (b)-(d) show the relevant $\tau^k$ with $k=1,3,9$.}
\label{MD:fig:motivation2}

\end{figure}
The evolution of $\tau^k$ changing with $k$ can be clearly illustrated in Figs. \ref{MD:fig:motivation1} and \ref{MD:fig:motivation2} for $1$-dimensional and $2$-dimensional cases, respectively. According to the property described above, it is reasonable to expect that
\begin{equation*}
  f^*=\lim_{k\to\infty}
  \frac{\int_\Omega f(x)\tau^k(x)\ud x}
  {\int_\Omega\tau^k(t)\ud t}.
\end{equation*}
It is worth noting that the identity above depends only on the monotonicity and nonnegativity of the function $\rho(y)=\frac{1}{y-f^*+1}$, that is, $\tau(x)=\rho(f(x))$. The reason for including $f^*$ in the definition of $\tau$ is only to meet the nonnegativity requirement. So we can introduce the following concept:
\begin{definition}\label{MD:def:nmd}
Suppose $\Omega\subset\mathbb{R}^n$ is a compact set, $f:\Omega\subset\mathbb{R}^n\to\mathbb{R}$ is a continuous real function on $\Omega$, i.e., $f\in C(\Omega)$, and $\rho:\mathbb{R}\to\mathbb{R}$ is monotonically decreasing with $\rho(f(x))>0$ for every $x\in\Omega$. For any $k\in\mathbb{R}$, we define a nascent minima distribution function by
\begin{equation}\label{MD:eq:nmd}
  m^{(k)}(x)=m_{f,\Omega}^{(k)}(x)=
  \frac{\tau^k(x)}{\int_\Omega\tau^k(t)\ud t},
  ~~\textrm{where}~~\tau(x)=\rho(f(x)).
\end{equation}
And a typical choice of $\tau$ is the exponential-type, i.e., $\tau(x)=e^{-f(x)}$.
\end{definition}
\begin{remark}
The exponential-type definition of $\tau$ does not depend on $f^*$ because of the nonnegativity of the exponential function itself. In addition, the rational-type definition \eqref{MD:eq:motivation} can be further extended as $\tau(x)=\frac{1}{f(x)-f^*+p}$ for any $p>0$, and the arbitrariness of $p$ could partially weaken the dependence of $\tau$ on the unknown $f^*$.
\end{remark}

\begin{figure}[!h]
\centering
\includegraphics[width=0.24\textwidth]{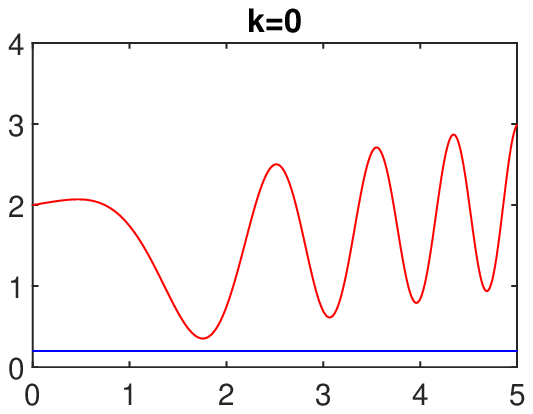}
\includegraphics[width=0.24\textwidth]{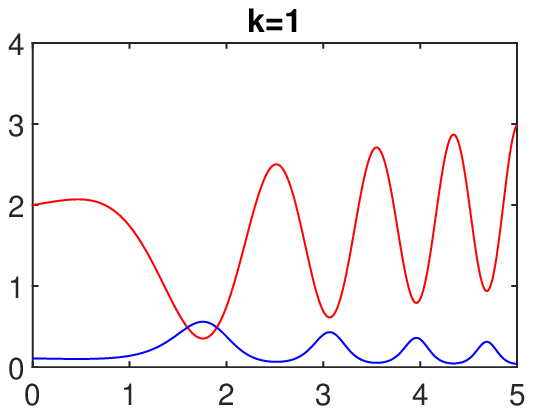}
\includegraphics[width=0.24\textwidth]{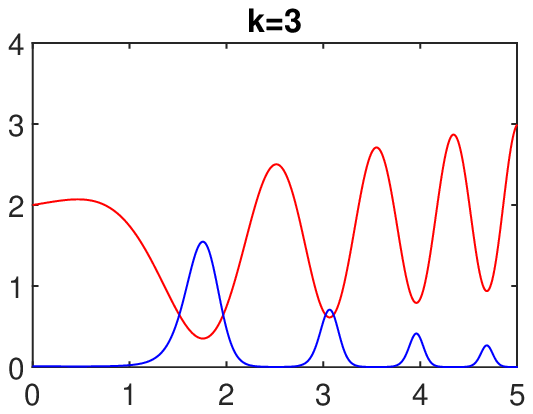}
\includegraphics[width=0.24\textwidth]{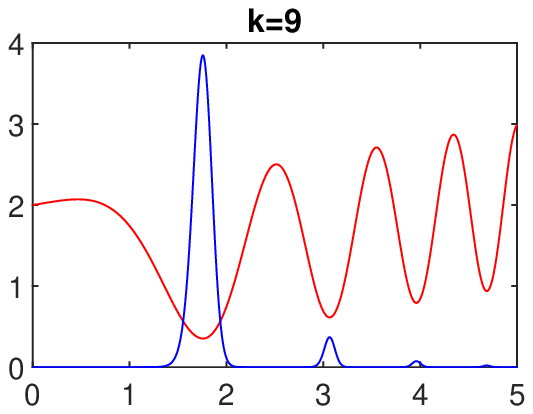}
\caption{One-dimensional illustration of the nascent MD functions. The plots show the target function $f=\cos(x^2)+x/5+1,x\in[0,5]$ in red and the relevant exponential-type nascent MD functions $m^{(k)}$ with $k=0,1,3,9$ in blue.}
\label{MD:fig:nmd1}
\end{figure}

\begin{figure}[!h]
\centering
\subfigure[$f$]{\includegraphics[width=0.24\textwidth]{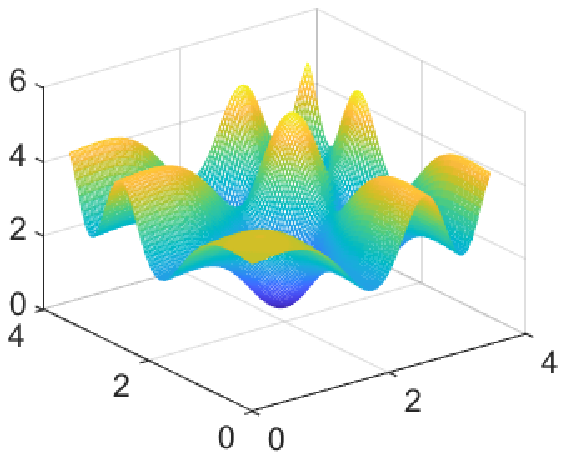}}
\subfigure[$m^{(1)}$]{\includegraphics[width=0.24\textwidth]{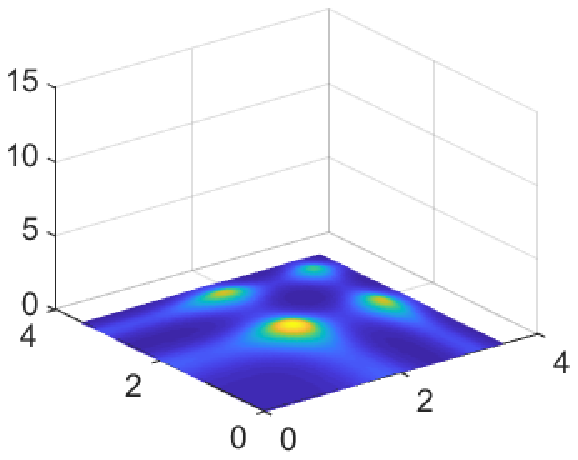}}
\subfigure[$m^{(3)}$]{\includegraphics[width=0.24\textwidth]{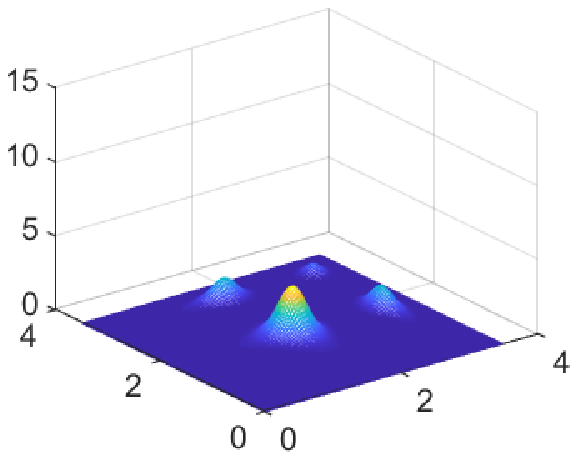}}
\subfigure[$m^{(9)}$]{\includegraphics[width=0.24\textwidth]{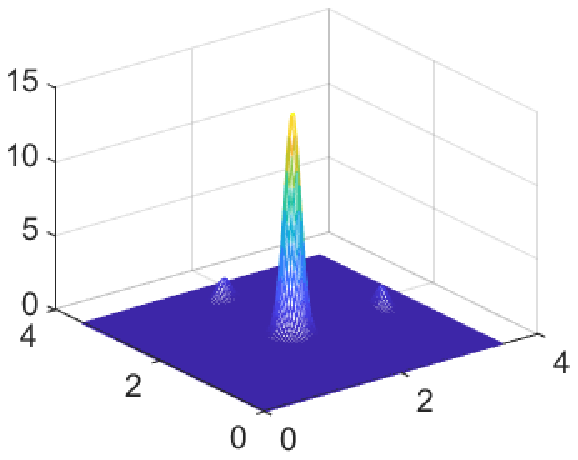}}
\caption{Two-dimensional illustration of the nascent MD functions. The plot (a) shows the target function $f(x)=\cos(x_1^2)+\cos(x_2^2)+x_1/5+x_2/5+2, x=(x_1,x_2)\in[0,3.5]^2$, and the plots (b)-(d) show the relevant exponential-type nascent MD functions $m^{(k)}$ with $k=1,3,9$.}
\label{MD:fig:nmd2}
\end{figure}

The nascent MD functions of varying parameters are illustrated in Figs. \ref{MD:fig:nmd1} and \ref{MD:fig:nmd2} for $1$-dimensional and $2$-dimensional cases, respectively. In each example, $X^*=x^*$ is a single point set and $m^{(k)}$ gradually evolves from a uniform distribution on $\Omega$ to the Dirac delta function $\delta(x-x^*)$; however, the construction of $m^{(k)}$ could be totally not related to the point $x^*$. This implies a potential relationship between $f$ and $X^*$, which will be strictly established later. And $X^*\subset\Omega$ could be not only any single point, finite, or countable subset but also any measurable subset.

For any $h\in C(\Omega)$ and $\nu\in\mathbb{R}$, define the expectation
\begin{equation}\label{MD:eq:E}
  \mathbb{E}^{(k)}_t\big(h(x+t)\big)^\nu
  =\int_\Omega h^\nu(x+t)m^{(k)}(t)\ud t
\end{equation}
and denote $\mathbb{E}^{(k)}(h^\nu)=\mathbb{E}^{(k)}_t\big(h(t)\big)^\nu$ to unclutter the notation.

First, we consider some relevant properties of the nascent MD functions.
\begin{theorem}\label{MD:thm:nmd}
The nascent minima distribution function defined in \eqref{MD:eq:nmd} satisfies:
\begin{enumerate}[(i)]
  \item For $k>0$, the maxima of $m^{(k)}$ is $m^{(k)}(x^*)$ and the set of all maximizers is $X^*$.
  \item For any $k\in\mathbb{R}$, $m^{(k)}$ is a probability density function (PDF) on $\Omega$; especially, $m^{(0)}(x)=1/\mu(\Omega)$ is the PDF of the continuous uniform distribution on $\Omega$, where $\mu(S)=\int_S\ud t$ is the $n$-dimensional Lebesgue measure of $S$.
  \item If $f\in C^1(\Omega)$, then $\nabla m^{(k)}(x)=km^{(k)}(x)\nabla \tau(x)/\tau(x)$.
  \item Suppose $f\in C^2(\Omega)$. If $x^*_1,x^*_2\in X^*$ and $\nabla^2f(x^*_1)\succ\nabla^2f(x^*_2)\succ0$, then
      \begin{equation*}
        \nabla^2m^{(k)}(x_1^*)\prec\nabla^2m^{(k)}(x_2^*)\prec0,
      \end{equation*}
      where $A\succ0$ is a generalized inequality meaning $A$ is a strictly positive definite matrix, $\nabla^2f$ and $\nabla^2m^{(k)}$ are the Hessian functions of $f$ and $m^{(k)}$, respectively.
  \item For every $k\in\mathbb{R}$, it holds that
      \begin{equation*}
        \frac{\ud}{\ud k}m^{(k)}(x)=m^{(k)}(x)
        \Big(\log(\tau(x))-\mathbb{E}^{(k)}(\log(\tau))\Big).
      \end{equation*}
    \item If $X^*$ has zero $n$-dimensional Lebesgue measure, i.e., $\mu(X^*)=0$, then
      \begin{equation*}
        \lim_{k\to\infty}m^{(k)}(x)=
        \left\{\begin{array}{cc}
        \infty, & x\in X^*; \\
        0, & x\notin X^*.
        \end{array}\right.
      \end{equation*}
  \item If $X^*$ has nonzero $n$-dimensional Lebesgue measure, i.e., $\mu(X^*)>0$, then
      \begin{equation*}
        \lim_{k\to\infty}m^{(k)}(x)=
        \left\{\begin{array}{cc}
        \frac{1}{\mu(X^*)}, & x\in X^*; \\
        0, & x\notin X^*.
        \end{array}\right.
      \end{equation*}
\end{enumerate}
\end{theorem}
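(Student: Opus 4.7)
The plan is to verify each of the seven assertions in turn, exploiting the simple representation $m^{(k)}(x)=\tau^k(x)/Z_k$, where $Z_k=\int_\Omega\tau^k(t)\,dt$ is a positive normalization constant and $\tau=\rho\circ f$ is continuous and strictly positive on the compact set $\Omega$.

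Items (i)--(iii) follow almost immediately from the definition. Since $\rho$ is strictly decreasing, $\tau$ attains its maximum precisely on $X^*$; for $k>0$, raising to the $k$-th power preserves this and dividing by the positive constant $Z_k$ does not move the maxima, proving (i). For (ii), $m^{(k)}\geq 0$ and integrates to $1$ by construction, and $\tau^0\equiv 1$ gives $m^{(0)}=1/\mu(\Omega)$. The gradient in (iii) comes from the chain rule: $\nabla m^{(k)}=k\tau^{k-1}\nabla\tau/Z_k=k\,m^{(k)}\nabla\tau/\tau$.

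For (iv), I would use $\nabla\tau=\rho'(f)\nabla f$ and $\nabla^2\tau=\rho''(f)\nabla f\,\nabla f^{\top}+\rho'(f)\nabla^2 f$. At any interior global minimizer $x^*$ one has $\nabla f(x^*)=0$, hence $\nabla\tau(x^*)=0$ and $\nabla^2\tau(x^*)=\rho'(f^*)\nabla^2 f(x^*)$. Differentiating the identity in (iii) once more and using $\nabla\tau(x^*)=0$ to kill the cross terms leaves
\begin{equation*}
  \nabla^2 m^{(k)}(x^*)=\frac{k\,m^{(k)}(x^*)\,\rho'(f^*)}{\rho(f^*)}\,\nabla^2 f(x^*).
\end{equation*}
The scalar prefactor is negative (since $\rho'(f^*)<0$ and all other factors are positive) and is common to $x_1^*$ and $x_2^*$ (as $\tau$ and $m^{(k)}$ take a single value on $X^*$), so the inequality $\nabla^2 f(x_1^*)\succ\nabla^2 f(x_2^*)\succ 0$ reverses sign, giving (iv). Item (v) is a direct calculation using $\tfrac{d}{dk}\tau^k=\tau^k\log\tau$ inside the quotient rule, with differentiation under the integral justified by boundedness of $\tau$ and $|\log\tau|$ on the compact $\Omega$; recognizing $Z_k^{-1}\int\tau^k\log\tau\,dt=\mathbb{E}^{(k)}(\log\tau)$ finishes the formula.

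The substance of the theorem sits in (vi) and (vii), which I would handle by a single Laplace-type argument applied to
\begin{equation*}
  m^{(k)}(x)=\left(\int_\Omega\bigl(\tau(t)/\tau(x)\bigr)^k\,dt\right)^{-1}.
\end{equation*}
For $x\in X^*$, $\tau(x)=\rho(f^*)$ is the maximum of $\tau$, so the integrand is bounded by $1$ and converges pointwise to the indicator of $X^*$; dominated convergence makes the integral tend to $\mu(X^*)$, producing the limits $\infty$ (case $\mu(X^*)=0$) and $1/\mu(X^*)$ (case $\mu(X^*)>0$). For $x\notin X^*$, pick $\epsilon>0$ with $f(x)>f^*+\epsilon$ and set $A_\epsilon=\{t\in\Omega:f(t)\leq f^*+\epsilon\}$. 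The main obstacle is securing $\mu(A_\epsilon)>0$: this follows from continuity of $f$ at any global minimizer lying in the interior of $\Omega$, a mild regularity condition already implicit in the setup. Given this, $\tau(t)\geq\rho(f^*+\epsilon)$ on $A_\epsilon$, so $Z_k\geq\mu(A_\epsilon)\rho(f^*+\epsilon)^k$ and
\begin{equation*}
  m^{(k)}(x)\leq\frac{1}{\mu(A_\epsilon)}\left(\frac{\rho(f(x))}{\rho(f^*+\epsilon)}\right)^{\!k}\longrightarrow 0,
\end{equation*}
since the base is strictly less than $1$. This single estimate covers the off-$X^*$ branch of both (vi) and (vii), completing the proof.
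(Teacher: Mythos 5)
Your proposal is correct and follows essentially the same route as the paper: the same chain-rule computations for (i)--(v), the same vanishing-gradient reduction at interior minimizers for (iv), and the same Laplace-type ratio estimate $m^{(k)}(x)=\bigl(\int_\Omega(\tau(t)/\tau(x))^k\,dt\bigr)^{-1}$ for (vi)--(vii), with your sublevel set $A_\epsilon$ playing exactly the role of the paper's superlevel set $\Omega_p$. The one place you are more careful than the paper is in flagging that $\mu(A_\epsilon)>0$ needs a mild regularity assumption on $\Omega$ (the paper simply asserts the analogous fact for $\Omega_p$), which is a fair and worthwhile observation rather than a defect of your argument.
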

\begin{proof}
Clearly, (i) and (ii) follow from the monotonicity and nonnegativity of $\rho$.

If $f\in C^1(\Omega)$, then (iii) follows from
\begin{equation*}
  \nabla m^{(k)}(x)=\frac{\nabla\tau^k(x)}{\int_\Omega\tau^k(t)\ud t}
  =\frac{k\tau^{k-1}(x)\nabla\tau(x)}{\int_\Omega\tau^k(t)\ud t}
  =\frac{km^{(k)}(x)\nabla\tau(x)}{\tau(x)},
\end{equation*}

If $f\in C^2(\Omega)$, for any $x^*\in X^*$, it holds from $\nabla\tau(x^*)=\rho'(f^*)\nabla f(x^*)=0$ that
\begin{equation*}
  \nabla^2m^{(k)}(x^*)
  =\rho'(f^*)\frac{k\tau^{k-1}(x^*)}
  {\int_\Omega\tau^k(t)\ud t}\nabla^2f(x^*),
\end{equation*}
then (iv) follows from the monotonicity of $\rho$, i.e., $\rho'(f^*)<0$.

For every $k\in\mathbb{R}$, it holds from $\frac{\ud}{\ud k}\tau^k(x)= \tau^k(x)\log(\tau(x))$ that
\begin{align*}
  \frac{\ud}{\ud k}m^{(k)}(x)=&\frac{\ud}{\ud k}
  \left(\frac{\tau^k(x)}{\int_\Omega\tau^k(t)\ud t}\right) \\
  =&\frac{\tau^k(x)\log(\tau(x))\int_\Omega\tau^k(t)\ud t-
  \tau^k(x)\int_\Omega\tau^k(t)\log(\tau(t))\ud t}
  {\left(\int_\Omega\tau^k(t)\ud t\right)^2} \\
  =&m^{(k)}(x)\left(\log(\tau(x))-\int_\Omega m^{(k)}(t)\log(\tau(t))\ud t\right) \\
  =&m^{(k)}(x)\Big(\log(\tau(x))-\mathbb{E}^{(k)}(\log(\tau))\Big),
\end{align*}
this proves (v).

For any $x'\notin X^*$, let $p=\tau(x')>0$, then there must exist an open set $\Omega_p$ that has nonzero $n$-dimensional Lebesgue measure such that $\tau(t)>p$ if $t\in\Omega_p$ and $\tau(t)\leqslant p$ if $t\notin\Omega_p$, and further,
\begin{align*}
  m^{(k)}(x')=\frac{p^k}{\int_{\Omega_p}\tau^k(t)\ud t
  +\int_{\Omega-\Omega_p}\tau^k(t)\ud t}
  \leqslant\frac{1}{\int_{\Omega_p}p^{-k}\tau^k(t)\ud t},
\end{align*}
since $p^{-1}\tau(t)>1$ for any $t\in\Omega_p$, the limit of $\int_{\Omega_p}p^{-k}\tau^k(t)\ud t$ tends to $\infty$ as $k\to\infty$; thus, it holds that
\begin{equation*}
  \lim_{k\to\infty}m^{(k)}(x')=0,~\forall x'\notin X^*.
\end{equation*}
Otherwise, for any $x''\in X^*$,
\begin{align*}
  m^{(k)}(x'')=\frac{1}{\int_{X^*}\ud t
  +\int_{\Omega-X^*}\tau^{-k}(x'')\tau^k(t)\ud t},
\end{align*}
since $\tau^{-1}(x'')\tau(t)<1$ for any $t\in\Omega-X^*$, the limit of $\int_{\Omega-X^*}\tau^{-k}(x'')\tau^k(t)\ud t$ tends to $0$ as $k\to\infty$; thus, for any $x''\in X^*$, it follows that
\begin{equation*}
 \lim_{k\to\infty}m^{(k)}(x'')=
 \left\{\begin{array}{cc}
   \infty, & \mu(X^*)=0, \\
   \frac{1}{\mu(X^*)}, & \mu(X^*)>0;
 \end{array}\right.
\end{equation*}
this proves (vi) and (vii), and the proof is complete.\qed
\end{proof}
\begin{remark}
According to the properties (iii) and (v), it is a very natural thing to choose the exponential-type $\tau$; and in this case,
\begin{equation*}
  \nabla m^{(k)}(x)=-km^{(k)}(x)\nabla f(x)~~\textrm{and}~~
  \frac{\ud}{\ud k}m^{(k)}(x)=m^{(k)}(x)\left(\mathbb{E}^{(k)}(f)-f(x)\right).
\end{equation*}
\end{remark}

\subsection{Monotonic convergence}
\label{MD:s2:mc}

An attractive property of $m^{(k)}$ is that $\int_\Omega f(x)m^{(k)}(x) \ud x$ monotonically converges to the global minima $f^*$ as $k\to\infty$ for all continuous functions on a compact set $\Omega$ without any other assumptions. This monotonic convergence strictly confirms the role of $m^{(k)}$ as a link between $f$ and $f^*$.

We first prove the convergence according to the continuity of $f$.
\begin{theorem}[convergence]\label{MD:thm:C}
If $f\in C(\Omega)$, then
\begin{equation*}
  \lim_{k\to\infty}\int_\Omega f(x)m^{(k)}(x)\ud x=f^*;
\end{equation*}
moreover, if $x^*$ is the unique global minimizer of $f$ in $\Omega$, then
\begin{equation*}
  \lim_{k\to\infty}\int_\Omega x\cdot m^{(k)}(x)\ud x=x^*.
\end{equation*}
\end{theorem}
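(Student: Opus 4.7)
The plan is to exploit the fact that, as $k\to\infty$, the probability mass of $m^{(k)}$ concentrates on arbitrarily small sublevel sets of $f$, so that the expectation $\int_\Omega f\,m^{(k)}\,\ud x$ is squeezed down to $f^*$. Since $f(x)\geqslant f^*$ and $m^{(k)}(x)\geqslant 0$ on $\Omega$, one immediately has $\int_\Omega f(x)m^{(k)}(x)\,\ud x - f^* = \int_\Omega (f(x)-f^*)\,m^{(k)}(x)\,\ud x \geqslant 0$, so only a matching upper bound is required for the first assertion. I would not try to invoke the pointwise limits in Theorem \ref{MD:thm:nmd} (vi)--(vii) directly, since $m^{(k)}$ can blow up on $X^*$, which rules out dominated convergence; instead I would push everything through a direct $\epsilon$-split at the level of $\tau^k$.

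For the upper bound I would fix $\epsilon>0$ and partition $\Omega$ into the sublevel set $\Omega_\epsilon = \{x\in\Omega:f(x)<f^*+\epsilon\}$ and its complement. The contribution from $\Omega_\epsilon$ to $\int_\Omega(f-f^*)m^{(k)}\,\ud x$ is at most $\epsilon$, and on the complement $f-f^*$ is bounded by $M := \max_\Omega f - f^*$, so it suffices to prove that the outer mass $\int_{\Omega\setminus\Omega_\epsilon} m^{(k)}(x)\,\ud x$ vanishes as $k\to\infty$. Here I would introduce the auxiliary sublevel set $\Omega_{\epsilon/2}$ and use strict monotonicity of $\rho$: on $\Omega\setminus\Omega_\epsilon$ one has $\tau(x)\leqslant a := \rho(f^*+\epsilon)$, whereas on $\Omega_{\epsilon/2}$ one has $\tau(x)\geqslant b := \rho(f^*+\epsilon/2)>a$. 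The ratio $\int_{\Omega\setminus\Omega_\epsilon}\tau^k\,\ud t\big/\int_\Omega\tau^k\,\ud t$ is then majorised by $(a/b)^k\cdot\mu(\Omega)/\mu(\Omega_{\epsilon/2})$, which tends to $0$ in $k$. Letting $k\to\infty$ first and then $\epsilon\to 0$ closes the first statement.

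The second statement follows the same template, with $\Omega_\epsilon$ replaced by the ball intersection $B_\delta(x^*)\cap\Omega$. Because $x^*$ is the unique global minimizer and $f$ is continuous on the compact set $\Omega\setminus B_\delta(x^*)$, the continuous function $f$ attains a minimum $f^*+\eta_\delta$ there with $\eta_\delta>0$; the identical $\tau^k$-ratio argument then forces $\int_{\Omega\setminus B_\delta(x^*)}m^{(k)}\,\ud x\to 0$ as $k\to\infty$. Combining with the trivial bound $\|x-x^*\|\leqslant\delta$ inside the ball yields $\big\|\int_\Omega x\,m^{(k)}(x)\,\ud x - x^*\big\| \leqslant \delta + \mathrm{diam}(\Omega)\cdot\int_{\Omega\setminus B_\delta(x^*)}m^{(k)}(x)\,\ud x$, so sending $k\to\infty$ and then $\delta\to 0$ concludes.

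The main technical nuisance I anticipate is verifying that $\Omega_{\epsilon/2}$ and $B_\delta(x^*)\cap\Omega$ have strictly positive Lebesgue measure for all small $\epsilon,\delta>0$, which is what keeps the denominator $\int_\Omega\tau^k\,\ud t$ bounded below by $b^k\mu(\Omega_{\epsilon/2})$. This is automatic as soon as some minimizer lies in the closure of the interior of $\Omega$, which is guaranteed by the stated compactness setup of inequality-defined $\Omega$ with nonempty interior; I would dispose of it with a short topological remark rather than as a dedicated step.
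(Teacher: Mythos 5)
Your proposal is correct and follows the same skeleton as the paper's proof: split $\Omega$ into a sublevel neighborhood of $X^*$ where $f-f^*<\epsilon$ and its complement, bound the inner contribution by $\epsilon$ and the outer one by $(\max_\Omega f-f^*)$ times the outer mass, then show the outer mass vanishes as $k\to\infty$; the second limit is handled by the analogous ball decomposition in both treatments. The one substantive difference is how that outer-mass step is justified: the paper simply cites the pointwise limits (vi)--(vii) of Theorem~\ref{MD:thm:nmd}, whereas you prove it directly via the ratio bound $\int_{\Omega\setminus\Omega_\epsilon}\tau^k\big/\int_\Omega\tau^k\leqslant (a/b)^k\,\mu(\Omega)/\mu(\Omega_{\epsilon/2})$ with $a=\rho(f^*+\epsilon)<b=\rho(f^*+\epsilon/2)$. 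Your version is the more self-contained one --- passing from pointwise decay of $m^{(k)}$ to decay of its integral does require a uniform bound on $\Omega\setminus\Omega_\epsilon$, and the bound you exhibit is exactly what makes the paper's citation legitimate --- at the cost of needing $\mu(\Omega_{\epsilon/2})>0$, an implicit positivity assumption the paper also relies on (e.g.\ in its proof of Theorem~\ref{MD:thm:nmd}(vi)), so your closing topological remark is an acceptable way to dispose of it.
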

\begin{proof}
If $f$ is a constant function on $\Omega$, for every $k$, we have
\begin{equation*}
  \int_\Omega f(x)m^{(k)}(x)\ud x=f^*.
\end{equation*}
Now suppose that $f$ is not a constant function on $\Omega$. Given $\epsilon>0$, it follows from the continuity of $f$ that there exists an open set $\Omega_\epsilon$ such that $X^*\subset\Omega_\epsilon$ and $f(x)-f^*<\epsilon$ holds for all $x\in\Omega_\epsilon$; further, according to (vi) and (vii) of Theorem \ref{MD:thm:nmd}, there exists a $K\in\mathbb{N}$ such that
\begin{equation*}
  \int_{\Omega-\Omega_\epsilon}m^{(k)}(x)\ud x<\epsilon
\end{equation*}
holds for every $k>K$, hence,
\begin{align*}
  \int_\Omega f(x)m^{(k)}(x)\ud x-f^*
  =&\int_\Omega\big(f(x)-f^*\big)m^{(k)}(x)\ud x \\
  =&\int_{\Omega_\epsilon}\!\!\big(f(x)\!-\!f^*\big)m^{(k)}(x)\ud x
  +\int_{\Omega-\Omega_\epsilon}\!\!\big(f(x)\!-\!f^*\big)m^{(k)}(x)\ud x \\
  <&\epsilon(1-\epsilon)+R_f\epsilon<(1+R_f)\epsilon,
\end{align*}
where $R_f=\max_{x\in\Omega}f(x)-f^*$ and $k>K$, which completes the proof of the first identity; and in a similar way one can establish the second one.\qed
\end{proof}

Now we prove the nonnegativity.
\begin{theorem}[nonnegativity]\label{MD:thm:NN}
For every $k\in\mathbb{R}$, we have
\begin{equation*}
  \int_\Omega f(x)m^{(k)}(x)\ud x\geqslant f^*,
\end{equation*}
and it becomes an equality if and only if $f$ is a constant function on $\Omega$.
\end{theorem}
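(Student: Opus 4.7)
The plan is to view this as a nearly immediate consequence of two properties already in hand: the pointwise bound $f(x)\geqslant f^*$ on $\Omega$ (since $f^*$ is the global minimum) and the fact that $m^{(k)}$ is a PDF on $\Omega$ (property (ii) of Theorem~\ref{MD:thm:nmd}). Writing
\begin{equation*}
  \int_\Omega f(x)m^{(k)}(x)\ud x - f^* = \int_\Omega \bigl(f(x)-f^*\bigr)m^{(k)}(x)\ud x,
\end{equation*}
I would immediately read off nonnegativity from the fact that the integrand is the product of the nonnegative function $f(x)-f^*$ and the nonnegative density $m^{(k)}(x)$.

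For the equality characterization, the ``if'' direction is trivial: if $f\equiv f^*$ then $\int_\Omega f\, m^{(k)}\ud x = f^*\int_\Omega m^{(k)}\ud x = f^*$. For ``only if,'' I would exploit the \emph{strict} positivity of $m^{(k)}$ everywhere on $\Omega$, which follows from Definition~\ref{MD:def:nmd}: since $\tau(x)=\rho(f(x))>0$ for all $x\in\Omega$, the normalized function $m^{(k)}(x)=\tau^k(x)/\int_\Omega\tau^k(t)\ud t$ is strictly positive on $\Omega$ (note the denominator is finite and positive by compactness of $\Omega$ and continuity of $\tau$). Combined with $f(x)-f^*\geqslant0$, the assumption
\begin{equation*}
  \int_\Omega \bigl(f(x)-f^*\bigr)m^{(k)}(x)\ud x = 0
\end{equation*}
forces $f(x)-f^*=0$ almost everywhere on $\Omega$, and then continuity of $f$ upgrades this to $f(x)=f^*$ for every $x\in\Omega$, i.e., $f$ is constant.

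There is essentially no obstacle here; the statement is structurally the simplest possible integral inequality. The only point that requires a moment's care is justifying strict positivity of $m^{(k)}$ on all of $\Omega$ (not merely almost everywhere), which is needed to pass from ``integral of a nonnegative continuous function against $m^{(k)}$ is zero'' to ``the function is identically zero,'' and this follows cleanly from the construction of $\tau$ and the compactness of $\Omega$.
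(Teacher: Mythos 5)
Your proof is correct and follows essentially the same route as the paper's: both rewrite the difference as $\int_\Omega\bigl(f(x)-f^*\bigr)m^{(k)}(x)\,\ud x$ and deduce strictness from the fact that a non-constant $f$ exceeds $f^*$ on a set of positive measure against a positive density. If anything, you are slightly more careful than the paper, which cites only ``nonnegativity'' of $m^{(k)}$ at the final step, whereas the strict inequality genuinely requires the pointwise positivity of $m^{(k)}$ that you justify from $\tau=\rho(f)>0$ on the compact set $\Omega$.
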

\begin{proof}
According to (ii) of Theorem \ref{MD:thm:nmd}, $\int_\Omega m^{(k)}(x)\ud x=1$ for every $k\in\mathbb{R}$, so it follows that
\begin{equation*}
  \int_\Omega f(x)m^{(k)}(x)\ud x-f^*
  =\int_\Omega\big(f(x)-f^*\big)m^{(k)}(x)\ud x,
\end{equation*}
hence, 
\begin{equation*}
  \int_\Omega f(x)m^{(k)}(x)\ud x-f^*=0,
\end{equation*}
if and only if $f$ is a constant function on $\Omega$. 
If $f$ is not a constant function on $\Omega$, there exists a domain $D\subset\Omega$ of nonzero measure such that $f(x)-f^*>0$ on $D$; together with the nonnegativity of $m^{(k)}$, we have
\begin{equation*}
  \int_\Omega\big(f(x)-f^*\big)m^{(k)}(x)\ud x
  \geqslant\int_D\big(f(x)-f^*\big)m^{(k)}(x)\ud x>0,
\end{equation*}
and the proof is complete.\qed
\end{proof}

To prove the monotonicity, we need the following lemma.
\begin{lemma}[Gurland's inequality \cite{GurlandJ1967A_ExpectationInequality}]\label{MD:lem:Gurland}
Suppose $y$ is an arbitrary random variable defined on a subset of $\mathbb{R}$, if $g$ and $h$ are both non-increasing or non-decreasing, then $\mathbb{E}\big(g(y)\cdot h(y)\big)\geqslant \mathbb{E}(g(y))\mathbb{E}\big(h(y)\big)$; if $g$ is non-decreasing and $h$ non-increasing, or vice versa, then $\mathbb{E}\big(g(y)\cdot h(y)\big) \leqslant\mathbb{E}(g(y))\mathbb{E}\big(h(y)\big)$.
\end{lemma}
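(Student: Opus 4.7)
The plan is to prove this via the classical \emph{decoupling trick}, reducing the inequality to the elementary fact that two comonotone functions of the same random variable are positively correlated. I would let $y_1, y_2$ be two independent copies of $y$ on a product probability space and consider
\[
\Delta = \bigl(g(y_1) - g(y_2)\bigr)\bigl(h(y_1) - h(y_2)\bigr).
\]
When $g$ and $h$ are both non-decreasing, the factors $g(y_1)-g(y_2)$ and $h(y_1)-h(y_2)$ share the sign of $y_1-y_2$, so $\Delta \geqslant 0$ pointwise; the same holds when both are non-increasing, since both factors flip sign together.

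Next I would take expectations of $\Delta$, expand the product, and apply the i.i.d.\ hypothesis. Since $\mathbb{E}(g(y_j)h(y_j)) = \mathbb{E}(g(y)h(y))$ for $j=1,2$, and by independence $\mathbb{E}(g(y_1)h(y_2)) = \mathbb{E}(g(y_2)h(y_1)) = \mathbb{E}(g(y))\,\mathbb{E}(h(y))$, one obtains
\[
0 \leqslant \mathbb{E}(\Delta) = 2\,\mathbb{E}\bigl(g(y)h(y)\bigr) - 2\,\mathbb{E}\bigl(g(y)\bigr)\,\mathbb{E}\bigl(h(y)\bigr),
\]
which yields the first inequality after dividing by $2$.

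For the opposite-monotonicity case, I would apply what has already been shown to the pair $(g,-h)$: if $g$ is non-decreasing and $h$ is non-increasing, then $-h$ is non-decreasing, so the first case gives $\mathbb{E}\bigl(g(y)\cdot(-h(y))\bigr) \geqslant \mathbb{E}(g(y))\,\mathbb{E}(-h(y))$, which rearranges to the claimed reverse inequality. The mirror case (roles of $g$ and $h$ swapped) follows by the same argument.

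There is no deep obstacle here — the only bookkeeping concern is that expanding $\mathbb{E}(\Delta)$ requires $g(y)$, $h(y)$, and $g(y)h(y)$ all to be integrable, so that the cross terms can be decoupled via Fubini. In the intended application of this lemma inside the paper, these functions arise from a continuous $f$ and a monotone $\rho$ composed with $f$ on the compact set $\Omega$, so all relevant expectations are automatically finite and no further measure-theoretic subtlety arises.
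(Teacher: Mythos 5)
Your argument is correct and complete. Note that the paper itself offers no proof of this lemma at all---it is stated with a citation to Gurland (1967) and used as a black box---so there is nothing in the source to compare against step by step. What you have written is the standard decoupling proof of the Chebyshev correlation inequality: the pointwise sign of $\Delta=\bigl(g(y_1)-g(y_2)\bigr)\bigl(h(y_1)-h(y_2)\bigr)$ for comonotone $g,h$, the expansion $\mathbb{E}(\Delta)=2\,\mathbb{E}(gh)-2\,\mathbb{E}(g)\,\mathbb{E}(h)$ using independence of the two copies, and the reduction of the anti-monotone case to the monotone one via $(g,-h)$ are all sound. Your closing remark on integrability is also the right thing to flag: as stated (``an arbitrary random variable''), the lemma needs $g(y)$, $h(y)$, and $g(y)h(y)$ integrable for the cross terms to decouple, and you correctly observe that in every application in the paper $y=f(x)$ with $f$ continuous on the compact set $\Omega$ and $\tau=\rho(f)$ bounded above and below by positive constants, so $f$, $\tau$, and $\log\tau$ are all bounded and every expectation involved is finite. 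In short, you have supplied a self-contained elementary proof where the paper relies on an external reference; no gap.
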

\begin{theorem}[monotonicity]\label{MD:thm:MC}
For $k\in\mathbb{R}$, the nascent minima distribution function defined in \eqref{MD:eq:nmd} satisfies
\begin{equation*}
  \frac{\ud}{\ud k}\int_\Omega f(x)m^{(k)}(x)\ud x\leqslant0;
\end{equation*}
especially, if $\tau$ is the exponential-type $e^{-f}$ and $f$ is not a constant function on $\Omega$, then
\begin{equation*}
  \frac{\ud}{\ud k}\int_\Omega f(x)m^{(k)}(x)\ud x=-\mathbb{V}ar^{(k)}(f)<0,
\end{equation*}
where $\mathbb{V}ar^{(k)}(f)= \int_\Omega \big(f(x)-\mathbb{E}^{(k)}(f)\big)^2 m^{(k)}(x)\ud x$.
\end{theorem}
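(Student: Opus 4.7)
The plan is to exchange the $k$-derivative with the $x$-integral (using property (v) of Theorem~\ref{MD:thm:nmd}) and recognise the resulting expression as a covariance whose sign is controlled by Gurland's inequality. Differentiation under the integral is routine: on the compact set $\Omega$ the continuous and strictly positive function $\tau$ is bounded away from $0$ and $\infty$, so $\tau^k$ and $\tau^k\log\tau$ are uniformly bounded in $x$ on any compact $k$-interval and dominated convergence applies. Property (v) then gives
\begin{equation*}
\frac{d}{dk}\int_\Omega f(x) m^{(k)}(x)\,dx
 = \int_\Omega f(x) m^{(k)}(x)\bigl(\log\tau(x)-\mathbb{E}^{(k)}(\log\tau)\bigr)\,dx
 = \mathbb{E}^{(k)}(f\log\tau)-\mathbb{E}^{(k)}(f)\,\mathbb{E}^{(k)}(\log\tau),
\end{equation*}
i.e.\ the covariance of $f$ and $\log\tau$ under $m^{(k)}$.

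To sign this covariance I will reduce to one dimension via a pushforward and invoke Gurland's inequality. Since $\log\tau(x)=\log\rho(f(x))$ depends on $x$ only through $f(x)$, let $Y=f(X)$ for $X\sim m^{(k)}$ and take $g(y)=y$, $h(y)=\log\rho(y)$. By hypothesis $\rho$ is monotonically decreasing and $\log$ is increasing, so $g$ is non-decreasing while $h$ is non-increasing. Lemma~\ref{MD:lem:Gurland} then yields $\mathbb{E}(g(Y)h(Y))\leqslant\mathbb{E}(g(Y))\mathbb{E}(h(Y))$, which is exactly the non-positivity of the covariance above, establishing the first claim.

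For the exponential-type case $\tau=e^{-f}$ I have $\log\tau=-f$, and the identity above collapses directly to
\begin{equation*}
\frac{d}{dk}\int_\Omega f(x) m^{(k)}(x)\,dx = -\bigl(\mathbb{E}^{(k)}(f^2)-\mathbb{E}^{(k)}(f)^2\bigr) = -\mathbb{V}ar^{(k)}(f).
\end{equation*}
Strict negativity for non-constant $f$ then follows at once because $m^{(k)}$ is strictly positive on $\Omega$ (since $\tau>0$ everywhere and $\int_\Omega\tau^k<\infty$), and the variance of a non-constant continuous function under a density with full support on $\Omega$ is strictly positive.

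I do not anticipate any real obstacle: the argument reduces to two ingredients already in hand, namely the explicit $dm^{(k)}/dk$ formula from (v) and Gurland's inequality. The only mildly delicate point is the pushforward used to bring Gurland's one-dimensional hypothesis into play, but this is automatic because both factors in the covariance are deterministic functions of $f$ alone; equivalently, one may view the inequality as a standard Chebyshev/FKG-type monotone-rearrangement statement.
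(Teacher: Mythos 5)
Your proposal is correct and follows essentially the same route as the paper: both derive $\frac{\ud}{\ud k}\mathbb{E}^{(k)}(f)=\mathbb{E}^{(k)}(f\log\tau)-\mathbb{E}^{(k)}(f)\,\mathbb{E}^{(k)}(\log\tau)$ from property (v) of Theorem~\ref{MD:thm:nmd} (the paper via a mean-value argument on the difference quotient, you via dominated convergence), then push forward to $y=f(x)$ and apply Gurland's inequality to the non-decreasing $y\mapsto y$ and non-increasing $y\mapsto\log\rho(y)$, with the exponential-type case collapsing to $-\mathbb{V}ar^{(k)}(f)$ in the same way. No gaps.
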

\begin{proof}
According to (v) of Theorem \ref{MD:thm:nmd}, we have
\begin{align*}
  m^{(k+\Delta k)}(x)=&m^{(k)}(x)+
  \int_k^{k+\Delta k}\frac{\ud m^{(v)}(x)}{\ud v}\ud v\\
  =&m^{(k)}(x)+\int_k^{k+\Delta k}m^{(v)}(x)
  \left(\log(\tau(x))-\mathbb{E}^{(v)}(\log(\tau))\right)\ud v,
\end{align*}
then there exists a $\zeta\in(k,k+\Delta k)$ such that
\begin{align*}
  \frac{\mathbb{E}^{(k+\Delta k)}(f)-\mathbb{E}^{(k)}(f)}{\Delta k}
  =&\frac{1}{\Delta k}\int_\Omega f(x)
  \left(m^{(k+\Delta k)}(x)-m^{(k)}(x)\right)\ud x \\
  =&\frac{1}{\Delta k}\int_\Omega\int_k^{k+\Delta k}f(x)m^{(v)}(x)
  \left(\log(\tau(x))-\mathbb{E}^{(v)}\big(\log(\tau)\big)\right)\ud v\ud x \\
  =&\frac{1}{\Delta k}\int_k^{k+\Delta k}
  \left(\mathbb{E}^{(v)}\big(f\log(\tau)\big)
  -\mathbb{E}^{(v)}(f)\mathbb{E}^{(v)}\big(\log(\tau)\big)\right)\ud v \\
  =&\mathbb{E}^{(\zeta)}\big(f\log(\tau)\big)
  -\mathbb{E}^{(\zeta)}(f)\mathbb{E}^{(\zeta)}\big(\log(\tau)\big),
\end{align*}
hence, we have
\begin{align}\label{MD:eq:Ef}
  \frac{\ud \mathbb{E}^{(k)}(f)}{\ud k}
  =&\!\lim_{\Delta k\to0}\!\frac{\mathbb{E}^{(k+\Delta k)}(f)-\mathbb{E}^{(k)}(f)}{\Delta k} \notag\\
  =&\mathbb{E}^{(k)}\big(f\log(\tau)\big)
  \!-\!\mathbb{E}^{(k)}(f)\mathbb{E}^{(k)}\big(\log(\tau)\big).
\end{align}
Further, let $y=f(x)$, then $\log(\tau(x))=\log(\rho(y))$, and then
\begin{equation*}
  \mathbb{E}^{(k)}\big(f\log(\tau)\big)
  -\mathbb{E}^{(k)}(f)\mathbb{E}^{(k)}\big(\log(\tau)\big)
  =\mathbb{E}\big(y\log(\rho(y))\big)
  -\mathbb{E}(y)\mathbb{E}\big(\log(\rho(y))\big),
\end{equation*}
since $\log(\rho(y))$ is monotonically decreasing, it holds from Lemma \ref{MD:lem:Gurland} that
\begin{equation*}
  \mathbb{E}\big(y\log(\rho(y))\big)\leqslant
  \mathbb{E}(y)\mathbb{E}\big(\log(\rho(y))\big),
\end{equation*}
thus, we have $\frac{\ud\mathbb{E}^{(k)}(f)}{\ud k}\leqslant0$, as claimed. Specially, if $\tau(x)=e^{-f(x)}$, then
\begin{equation*}
  \mathbb{E}^{(k)}\big(f\log(\tau)\big)
  -\mathbb{E}^{(k)}(f)\mathbb{E}^{(k)}\big(\log(\tau)\big)
  =-\mathbb{E}^{(k)}\big(f^2\big)+\big(\mathbb{E}^{(k)}(f)\big)^2,
\end{equation*}
that is,
\begin{equation*}
  \frac{\ud\mathbb{E}^{(k)}(f)}{\ud k}
  =-\int_\Omega\left(f(x)-\mathbb{E}^{(k)}(f)\right)^2m^{(k)}(x)\ud x
  =-\mathbb{V}ar^{(k)}(f),
\end{equation*}
and it is clear that $\mathbb{V}ar^{(k)}(f)>0$ if the continuous function $f$ is not a constant function on $\Omega$, so the proof is complete.\qed
\end{proof}

From the Theorems \ref{MD:thm:NN} and \ref{MD:thm:MC} it follows as an immediate corollary that
\begin{corollary}\label{MD:cor:MC}
For all $k\in\mathbb{R}$ and $\Delta k>0$, it holds that
\begin{equation*}
  \int_\Omega f(x)m^{(k)}(x)\ud x\geqslant
  \int_\Omega f(x)m^{(k+\Delta k)}(x)\ud x\geqslant f^*;
\end{equation*}
further, if $\tau$ is the exponential-type $e^{-f}$ and $f$ is not a constant function on $\Omega$, then
\begin{equation*}
  \int_\Omega f(x)m^{(k)}(x)\ud x>
  \int_\Omega f(x)m^{(k+\Delta k)}(x)\ud x>f^*.
\end{equation*}
\end{corollary}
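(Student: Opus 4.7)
The plan is to recognize that this corollary is essentially an immediate consequence of Theorems \ref{MD:thm:NN} and \ref{MD:thm:MC}, glued together by the fundamental theorem of calculus; no new ideas are required. I would split the argument into the non-strict chain (holding for any admissible $\rho$) and the strict chain (specific to the exponential-type $\tau=e^{-f}$).

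First I would dispatch the general chain. The right inequality $\int_\Omega f(x)m^{(k+\Delta k)}(x)\ud x\geqslant f^*$ is precisely Theorem \ref{MD:thm:NN} applied at the parameter value $k+\Delta k$. For the left inequality, Theorem \ref{MD:thm:MC} gives $\frac{\ud}{\ud v}\mathbb{E}^{(v)}(f)\leqslant 0$ for every $v\in\mathbb{R}$; since the derivative formula \eqref{MD:eq:Ef} is continuous in $v$ (using continuity of $f$ and $\log\tau$ on the compact set $\Omega$, together with property (v) of Theorem \ref{MD:thm:nmd}), the fundamental theorem of calculus yields
\begin{equation*}
  \mathbb{E}^{(k+\Delta k)}(f)-\mathbb{E}^{(k)}(f)
  =\int_k^{k+\Delta k}\frac{\ud}{\ud v}\mathbb{E}^{(v)}(f)\,\ud v\leqslant 0,
\end{equation*}
which is exactly the desired ordering.

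For the strict chain, assume $\tau=e^{-f}$ and that $f$ is not constant on $\Omega$. Theorem \ref{MD:thm:MC} then sharpens the derivative identity to $\frac{\ud}{\ud v}\mathbb{E}^{(v)}(f)=-\mathbb{V}ar^{(v)}(f)$, and the key observation is that this quantity is strictly negative for every $v$: since $f\in C(\Omega)$ is non-constant, there is a set of positive Lebesgue measure on which $f$ differs from its mean $\mathbb{E}^{(v)}(f)$, and $m^{(v)}>0$ throughout $\Omega$, so $\mathbb{V}ar^{(v)}(f)>0$. Integrating this strictly negative derivative over $[k,k+\Delta k]$ produces the strict inequality $\mathbb{E}^{(k+\Delta k)}(f)<\mathbb{E}^{(k)}(f)$. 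The remaining strict bound $\mathbb{E}^{(k+\Delta k)}(f)>f^*$ is handled by the ``if and only if'' clause of Theorem \ref{MD:thm:NN}, since $f$ is assumed non-constant.

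There is no genuine obstacle here; the corollary is pure bookkeeping. The only point demanding a sentence of care is the regularity needed to invoke the fundamental theorem of calculus on $v\mapsto\mathbb{E}^{(v)}(f)$, and this is already supplied by Theorem \ref{MD:thm:nmd}(v) together with the compactness of $\Omega$ and continuity of $f$, so I would mention it only in passing.
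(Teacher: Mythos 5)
Your proof is correct and is exactly the argument the paper intends: it states the corollary as an immediate consequence of Theorems \ref{MD:thm:NN} and \ref{MD:thm:MC} without further detail, and your integration of the (strictly) nonpositive derivative over $[k,k+\Delta k]$ plus the equality clause of Theorem \ref{MD:thm:NN} is the natural way to fill that in. Nothing further is needed.
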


Further, the following conclusion give a sufficient condition that $\int_\Omega x\cdot m^{(k)}(x)\ud x$ monotonously converge to $x^*$.
\begin{theorem}\label{MD:thm:MCX}
If $x^*$ is the unique global minimizer of $f$ in $\Omega$, $f(x-x^*)=\psi(\|x-x^*\|_2)$ is radial and $\psi$ is a non-decreasing function on $\mathbb{R}_+$, then
\begin{equation*}
  \frac{\ud}{\ud k}\left\|\int_\Omega(x-x^*)
  m^{(k)}(x)\ud x\right\|_2\leqslant0.
\end{equation*}
\end{theorem}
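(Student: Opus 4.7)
The plan is to mimic the argument used for Theorem \ref{MD:thm:MC}: differentiate in $k$, invoke property (v) of Theorem \ref{MD:thm:nmd} to recast the derivative as a covariance under $m^{(k)}$, use the radiality of $\tau$ to collapse that covariance to one dimension, and finally sign it by Gurland's inequality (Lemma \ref{MD:lem:Gurland}). Set $I^{(k)} = \int_\Omega (x-x^*)m^{(k)}(x)\ud x$. At any $k$ with $I^{(k)} \neq 0$ the chain rule gives
$$
\frac{\ud}{\ud k}\|I^{(k)}\|_2 = \frac{I^{(k)}\cdot(\ud I^{(k)}/\ud k)}{\|I^{(k)}\|_2},
$$
so it suffices to prove $I^{(k)}\cdot\frac{\ud I^{(k)}}{\ud k}\leqslant 0$; the (relatively closed) set of $k$ where $I^{(k)}=0$ is then handled by continuity of $\|I^{(k)}\|_2$ in $k$. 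Applying property (v) of Theorem \ref{MD:thm:nmd} componentwise, exactly as in the derivation of \eqref{MD:eq:Ef}, yields
$$
\frac{\ud I^{(k)}}{\ud k} = \mathbb{E}^{(k)}\!\big((x-x^*)\log\tau\big) - I^{(k)}\mathbb{E}^{(k)}(\log\tau),
$$
so $I^{(k)}\cdot\frac{\ud I^{(k)}}{\ud k}$ is precisely the covariance under $m^{(k)}$ between the linear functional $x \mapsto I^{(k)}\cdot(x-x^*)$ and $\log\tau$.

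Next I would use radiality to reduce this to a one-dimensional covariance. Writing $x = x^* + R\theta$ with $R := \|x-x^*\|_2 \geqslant 0$ and $\theta \in S^{n-1}$, the density $m^{(k)}$ is itself radial about $x^*$, so the conditional distribution of $\theta$ given $R$ under $m^{(k)}$ is the uniform measure on the slice $S_R := \{\theta \in S^{n-1}: x^*+R\theta \in \Omega\}$. Setting $L(R) := \log\rho(\psi(R))$ and $\phi(R) := R\,I^{(k)}\!\cdot\bar\theta(R)$, where $\bar\theta(R)$ denotes the mean of $\theta$ over $S_R$, the tower property collapses the covariance to
$$
I^{(k)}\cdot\frac{\ud I^{(k)}}{\ud k} = \mathrm{Cov}_R\!\big(\phi(R),\,L(R)\big),
$$
a scalar covariance with respect to the marginal law of $R$ under $m^{(k)}$.

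Because $\rho$ is monotonically decreasing and $\psi$ non-decreasing, $L(R)$ is non-increasing in $R$, and so by Lemma \ref{MD:lem:Gurland} it suffices to show that $\phi(R)$ is non-decreasing in $R$. This final step is the main obstacle. On the one hand, $I^{(k)}$ is itself the $R$-marginal integral of $R\bar\theta(R)$, so its direction encodes the net off-centre bias of $\Omega$ about $x^*$; on the other, the slicewise centroids $\bar\theta(R)$ can a priori point in different directions at different radii, so the projection $I^{(k)}\!\cdot\bar\theta(R)$ need not be monotone pointwise. The argument has to exploit the self-referential tie between $I^{(k)}$ and the $\bar\theta(R)$'s: any slice whose contribution $R\bar\theta(R)$ is anti-aligned with $I^{(k)}$ can only appear at radii where it is outweighed by pro-aligned contributions elsewhere, and the moment-arm factor $R$ reinforces alignment at large $R$. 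Formalising this balance to extract the required monotonicity of $\phi(R)$---or a weaker comonotonicity property sufficient for Gurland's inequality---is the technical heart of the proof.
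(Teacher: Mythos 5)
Your reduction to the sign of $I^{(k)}\cdot\frac{\ud I^{(k)}}{\ud k}$ and the covariance identity obtained from property (v) of Theorem \ref{MD:thm:nmd} are both correct, and conditioning on the radius $R$ is a sensible way to bring Gurland's inequality into play. But the proof is not complete, and the step you flag as ``the technical heart'' is a genuine gap, not a formality: the function $\phi(R)=R\,I^{(k)}\cdot\bar\theta(R)$ simply need not be non-decreasing. Take $n=2$ and let $\Omega$ be the union of a small disk about $x^*$, an arc-shaped annular piece at radii $[1,2]$ concentrated in the direction $+e_1$, and another arc-shaped piece at radii $[3,4]$ concentrated in the direction $-e_1$; this is compact, $x^*$ is the unique global minimizer of a strictly increasing radial $\psi$, yet $\bar\theta(R)$ is proportional to $+e_1$ on the first radial range and to $-e_1$ on the second, so whichever way $I^{(k)}$ points, $\phi$ changes sign as $R$ increases and Lemma \ref{MD:lem:Gurland} cannot be applied to the pair $\big(\phi(R),L(R)\big)$. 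The ``self-referential tie'' between $I^{(k)}$ and the slicewise centroids does not rescue monotonicity of $\phi$, so the argument stalls exactly where you say it does.

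For comparison, the paper takes a shorter (and cruder) route: writing $r=\|x-x^*\|_2$, it asserts
\begin{equation*}
  \frac{\ud}{\ud k}\left\|\int_\Omega(x-x^*)m^{(k)}(x)\,\ud x\right\|_2
  \leqslant\frac{\ud}{\ud k}\int_\Omega r\,m^{(k)}(x)\,\ud x ,
\end{equation*}
and then applies \eqref{MD:eq:Ef} and Lemma \ref{MD:lem:Gurland} to the scalar pair $r\mapsto r$ (non-decreasing) and $r\mapsto\log\big(\rho(\psi(r))\big)$ (non-increasing) to conclude that the right-hand side is nonpositive. This sidesteps the angular difficulty entirely because it never has to control the direction of $\bar\theta(R)$. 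Note, however, that the paper's first inequality is itself justified only by the pointwise bound $\big\|\int_\Omega(x-x^*)m^{(k)}\,\ud x\big\|_2\leqslant\int_\Omega r\,m^{(k)}\,\ud x$, and an inequality between two functions of $k$ does not in general transfer to their derivatives; so a fully rigorous argument would still need to supply that missing comparison. In short: your setup is sound and more candid about where the real difficulty lies, but as written the proof does not close.
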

\begin{proof}
Let $r=\|x-x^*\|_2$, then
\begin{equation*}
  \frac{\ud}{\ud k}\left\|\int_\Omega(x-x^*)m^{(k)}(x)\ud x\right\|_2
  \leqslant\frac{\ud}{\ud k}\int_\Omega r\cdot m^{(k)}(x)\ud x,
\end{equation*}
then it holds from \eqref{MD:eq:Ef} that
\begin{equation*}
  \frac{\ud}{\ud k}\int_{\Omega_i}r\cdot m^{(k)}(x)\ud x
  =\mathbb{E}^{(k)}\big[r\log\big(\rho(\psi(r))\big)\big]\!-\!
  \mathbb{E}^{(k)}(r)\mathbb{E}^{(k)}\big[\log\big(\rho(\psi(r))\big)\big],
\end{equation*}
notice that $\log\big(\rho(\psi(r))\big)$ is monotonically decreasing, the desired result follows from Lemma \ref{MD:lem:Gurland}.\qed
\end{proof}

\subsection{Minima distribution}

Now we define a \emph{minima distribution} to be a weak limit $m_{f,\Omega}$ such that the identity
\begin{equation*}
  \int_\Omega m_{f,\Omega}(x)\varphi(x)\ud x=
  \lim_{k\to\infty}\int_\Omega m^{(k)}(x)\varphi(x)\ud x
\end{equation*}
holds for every smooth function $\varphi$ with compact support in $\Omega$. Here are three immediate properties of $m_{f,\Omega}$:
\begin{theorem}
The minima distribution satisfies the following properties:
\begin{enumerate}[(i)]
  \item $m_{f,\Omega}$ satisfies the identity $\int_\Omega m_{f,\Omega}(x)\ud x=1$.
  \item If $f$ is continuous on $\Omega$, then $f^*=\int_\Omega f(x)m_{f,\Omega}(x)\ud x$.
  \item If $x^*$ is the unique global minimizer of $f$ in any $\Omega$, then
      \begin{equation*}
        \int_\Omega x\cdot m_{f,\Omega}(x)\ud x
        =x^*\int_\Omega m_{f,\Omega}(x)\ud x.
      \end{equation*}
\end{enumerate}
\end{theorem}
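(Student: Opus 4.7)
The plan is to view $m_{f,\Omega}$ as a probability measure on the compact set $\Omega$, obtained as a weak-$*$ limit of the probability measures $\mu_k = m^{(k)}(x)\ud x$, and then to test this limiting measure against the three bounded continuous observables $1$, $f(x)$, and the coordinate functions $x\mapsto x_i$. The existence and uniqueness of the weak limit follow from compactness of $\Omega$ (which gives automatic tightness of $\{\mu_k\}$) together with the concentration statements (vi)-(vii) of Theorem \ref{MD:thm:nmd} and the convergence result Theorem \ref{MD:thm:C}: every weak-$*$ subsequential limit must be a probability measure supported on $X^*$ with $f$-moment equal to $f^*$ (and first moment $x^*$ in the uniqueness case), which forces all subsequential limits to coincide.

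For (i), I would use that each $\mu_k$ is a probability measure by part (ii) of Theorem \ref{MD:thm:nmd}, so its weak-$*$ limit is again a probability measure on the compact $\Omega$, giving $\int_\Omega m_{f,\Omega}(x)\ud x = 1$. For (ii), since $f$ is continuous and hence bounded on the compact $\Omega$, weak convergence of $\mu_k$ gives $\int f\,\ud\mu_k\to\int f\,\ud\mu_{f,\Omega}$, and by Theorem \ref{MD:thm:C} this limit equals $f^*$. For (iii), I would apply the same reasoning componentwise to each coordinate map $x\mapsto x_i$ (bounded and continuous on compact $\Omega$); the second half of Theorem \ref{MD:thm:C} then yields $\int_\Omega x\cdot m_{f,\Omega}(x)\ud x = x^*$, and combining with (i) gives $x^* = x^*\cdot 1 = x^*\int_\Omega m_{f,\Omega}(x)\ud x$, as claimed.

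The main obstacle is the bookkeeping needed to justify that the weak limit, which is defined in the statement only against smooth compactly supported test functions $\varphi\in C_c^\infty(\Omega)$, may in fact be paired with arbitrary bounded continuous observables such as $1$, $f$, and the coordinates $x_i$, which need not vanish near $\partial\Omega$. I would handle this by a truncation/approximation argument: given a neighborhood $U\supset X^*$, choose $\varphi_U\in C_c^\infty(\Omega)$ with $0\leqslant\varphi_U\leqslant 1$ and $\varphi_U\equiv 1$ on a smaller neighborhood of $X^*$, then split $\int h\,\ud\mu_k = \int h\varphi_U\,\ud\mu_k + \int h(1-\varphi_U)\,\ud\mu_k$ for $h\in\{1,f,x_i\}$ and use the concentration of $\mu_k$ on $X^*$ (parts (vi)-(vii) of Theorem \ref{MD:thm:nmd}) to make the second term uniformly small in $k$, so that the first term, controlled by the defining weak convergence on $C_c^\infty(\Omega)$, delivers the identities. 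With that density-type step in place, everything else reduces to invoking Theorems \ref{MD:thm:nmd} and \ref{MD:thm:C}.
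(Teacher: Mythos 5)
Your proposal is correct and follows the route the paper evidently intends: the paper states these as ``immediate properties,'' offering no proof beyond the remark that the weak limit inherits $\int_\Omega m^{(k)}\,\ud x=1$ from Theorem~\ref{MD:thm:nmd}(ii) and the two moment identities from Theorem~\ref{MD:thm:C}, which is exactly what you invoke. Your truncation step extending the pairing from $C_c^\infty(\Omega)$ test functions to $1$, $f$, and $x_i$ is a genuine refinement the paper omits; the only residual caveat is that it requires $X^*$ to lie in the interior of $\Omega$ (so that $\varphi_U\equiv1$ near $X^*$ can have compact support in $\Omega$), a boundary issue the paper itself also ignores.
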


A naive view of the minima distribution is that $m_{f,\Omega}$ is the pointwise limit
\begin{equation*}
  m_{f,\Omega}(x)=\lim_{k\to\infty}m^{(k)}(x)
\end{equation*}
that also reserves $\int_\Omega m_{f,\Omega}(x)\ud x=1$.
From this view, we begin by noting that $m_{f,\Omega}(x)=0$ if $x\notin X^*$ and deduce that $f(x)m_{f,\Omega}(x)=f^*\cdot m_{f,\Omega}(x)$ for every $x\in\Omega$, then we also have $\int_\Omega f(x)m_{f,\Omega}(x)\ud x =f^*\int_\Omega m_{f,\Omega}(x)\ud x=f^*$, and so on.

\subsection{Stability}
\label{MD:s2:stability}

Suppose $f\in C^2(\Omega)$. According to the first and second order optimality conditions, for any $x_i^*\in X^*$, it holds that
\begin{equation*}
  \nabla f(x_i^*)=0~~\textrm{and}~~\nabla^2f(x_i^*)\succ0,
\end{equation*}
then $f(x)$ is approximately equal to
\begin{equation*}
  f^*+\frac{1}{2}(x-x^*_i)^\mathrm{T}\nabla^2f(x^*_i)(x-x^*_i)
\end{equation*}
in a sufficiently small neighborhood $\Omega_i$ of $x^*_i$. So if $\nabla^2f(x_1^*)\succ\nabla^2f(x_2^*)\succ0$, then for the same small disturbance $\Delta x\in\mathbb{R}^n$, the change of $f(x)$ around $x_1^*$ will be larger than that around $x_2^*$. Usually, we say that $x_2^*$ is more stable than $x_1^*$. In the following, we will see how the minima distribution is associated with the stability.

If $\mu(X^*)\neq0$, then $m_{f,\Omega}$ can be viewed as the PDF of the uniform distribution on $X^*$; if $x^*$ is the unique minimizer of $f$ on $\Omega$, then $m_{f,\Omega}$ is exactly the Dirac delta function $\delta(x-x^*)$; and if $X^*$ is a finite set, i.e., $X^*=\{x^*_i\}_{i=1}^s$, then $m_{f,\Omega}$ can be given by a linear combination of Dirac delta functions
\begin{equation}\label{MD:eq:taom}
  m_{f,\Omega}(x)=\sum_{i=1}^sw_i\delta(x-x^*_i)~~~\textrm{with}~~~
  \sum_{i=1}^sw_i=1~~\textrm{and}~~w_i>0.
\end{equation}
According to (iv) of Theorem \ref{MD:thm:nmd}, if $f\in C^2(\Omega)$ and $\nabla^2f(x^*_1)\succ\nabla^2f(x^*_2)\succ\cdots\succ\nabla^2f(x^*_s)\succ0$, then for given $k\in\mathbb{N}$, it follows that
\begin{equation*}
  \nabla^2m^{(k)}(x^*_1)\prec\nabla^2m^{(k)}(x^*_2)\prec\cdots
  \prec\nabla^2m^{(k)}(x^*_s)\prec0;
\end{equation*}
and further notice that $m^{(k)}_{\Omega,f}$ is approximately equal to
\begin{equation*}
  m^{(k)}(x^*_i)+\frac{1}{2}(x-x^*_i)^\mathrm{T}
  \nabla^2m^{(k)}(x^*_i)(x-x^*_i)
\end{equation*}
in a sufficiently small neighborhood $\Omega_i$ of $x^*_i$.

\begin{figure}[!h]
\centering
\includegraphics[width=0.24\textwidth]{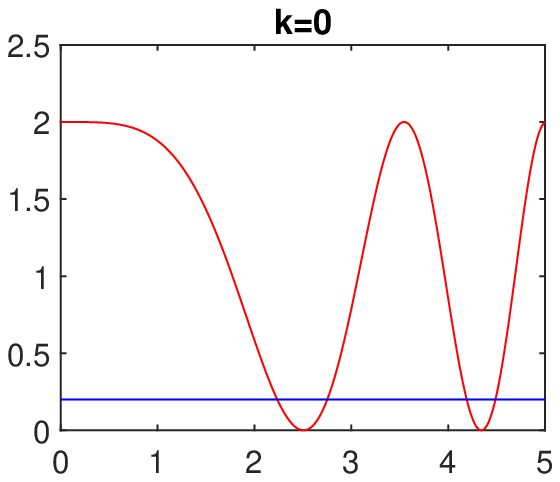}
\includegraphics[width=0.24\textwidth]{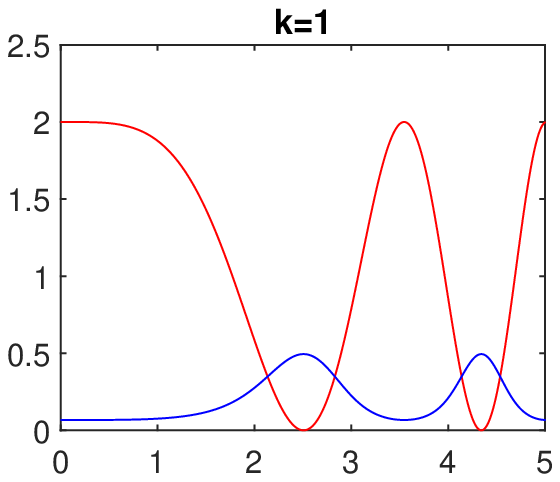}
\includegraphics[width=0.24\textwidth]{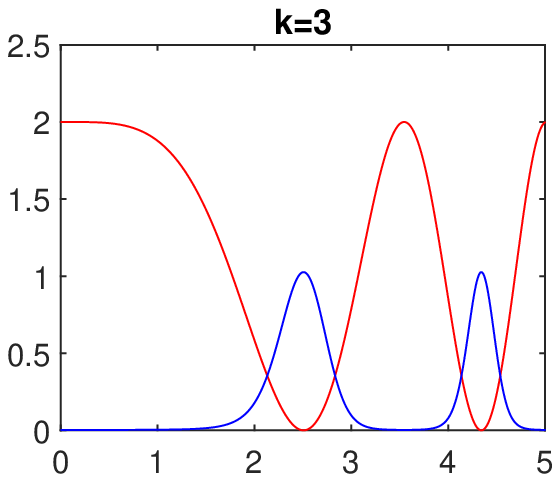}
\includegraphics[width=0.24\textwidth]{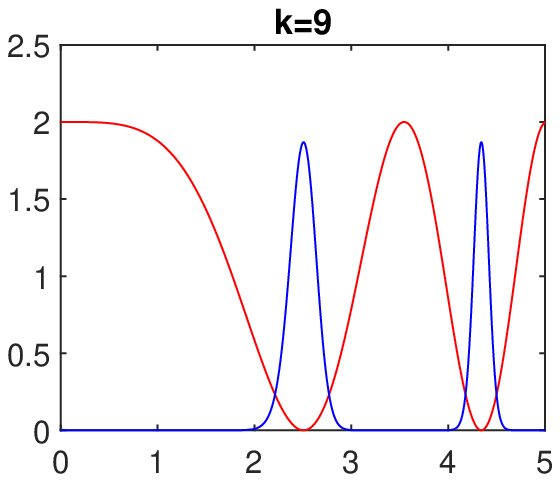}
\caption{Two-dimensional illustration of the stability. The plots show the target function $f=\cos(0.5x^2)+1,x\in[0,5]$ in red and the relevant exponential-type nascent MD functions $m^{(k)}$ with $k=0,1,3,9$ in blue.}
\label{MD:fig:s1}
\end{figure}

\begin{figure}[!h]
\centering
\subfigure[$f$]{\includegraphics[width=0.24\textwidth]{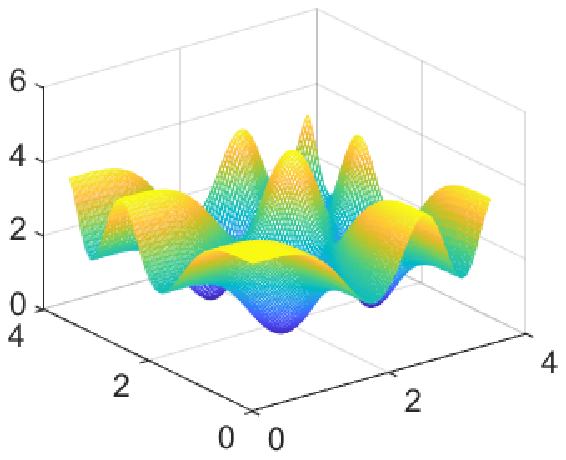}}
\subfigure[$m^{(1)}$]{\includegraphics[width=0.24\textwidth]{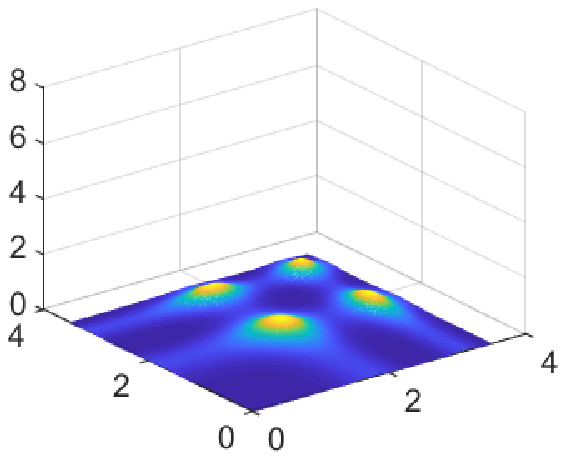}}
\subfigure[$m^{(3)}$]{\includegraphics[width=0.24\textwidth]{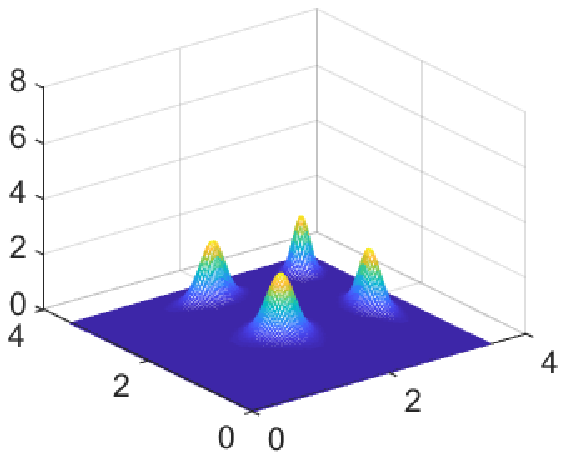}}
\subfigure[$m^{(9)}$]{\includegraphics[width=0.24\textwidth]{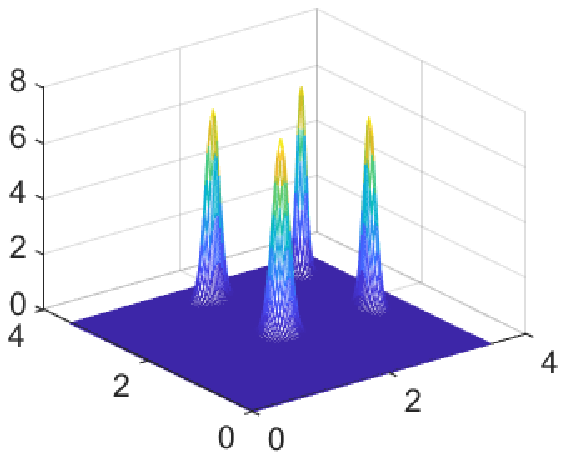}}
\caption{Two-dimensional illustration of the nascent MD functions. The plot (a) shows the target function $f(x)=\cos(x_1^2)+\cos(x_2^2)+2, x=(x_1,x_2)\in[0,3.5]^2$, and the plots (b)-(d) show the relevant exponential-type nascent MD functions $m^{(k)}$ with $k=1,3,9$.}
\label{MD:fig:s2}
\end{figure}

Hence, if all points $\{x_i^*\}_{i=1}^s$ are not on the boundary of $\Omega$, we obtain
\begin{equation*}
  \int_{\Omega_1}m^{(k)}(x)\ud x<\int_{\Omega_2}m^{(k)}(x)\ud x
  <\cdots<\int_{\Omega_s}m^{(k)}(x)\ud x,
\end{equation*}
and further, the weight coefficients of \eqref{MD:eq:taom} satisfy $w_1<w_2<\cdots<w_s$, as illustrated in Figs. \ref{MD:fig:s1} and \ref{MD:fig:s2}. So we have the following theorem:
\begin{theorem}
Suppose $f\in C^2(\Omega)$, $\nabla^2f(x_1^*)\succ\nabla^2f(x_2^*)\succ0$ and two points $x_1^*,x_2^*\in X^*$ are not on the boundary of $\Omega$. If $\xi$ is a random sample from a distribution with PDF $m_{f,\Omega}$, then the probability that $\xi$ takes $x_2^*$ is greater than the probability that $\xi$ takes $x_1^*$.
\end{theorem}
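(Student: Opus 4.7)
The plan is to make rigorous the informal heuristic that precedes the theorem, by computing the weights $w_i$ in the representation $m_{f,\Omega}=\sum_{i=1}^{s}w_i\delta(x-x_i^*)$ via the classical Laplace (saddle-point) method. The probability that a draw $\xi$ from $m_{f,\Omega}$ lands at $x_i^*$ is precisely $w_i$, so it suffices to show that $w_1<w_2$ whenever $\nabla^2 f(x_1^*)\succ\nabla^2 f(x_2^*)\succ 0$.

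First I would fix disjoint open balls $\Omega_i\subset\Omega$ around each $x_i^*$, all lying in the interior of $\Omega$ and small enough that $\log\tau$ is well approximated by its second-order Taylor expansion at $x_i^*$ throughout $\Omega_i$. Since $\tau=\rho\circ f$, $\rho$ is strictly decreasing, and $\nabla f(x_i^*)=0$, a direct computation yields $\nabla\log\tau(x_i^*)=0$ and $\nabla^2\log\tau(x_i^*)=-c_0\,H_i$, where $H_i:=\nabla^2 f(x_i^*)$ and $c_0:=-\rho'(f^*)/\rho(f^*)>0$. Crucially, $c_0$ is independent of $i$ because every $x_i^*$ attains the common value $f^*$, and $\tau(x_i^*)=\rho(f^*)$ is likewise the same for every $i$. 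So each $x_i^*$ is a nondegenerate interior maximum of $\tau$ with a common maximum value.

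The core asymptotic step is the classical Laplace expansion around each nondegenerate maximum: as $k\to\infty$,
\begin{equation*}
  \int_{\Omega_i}\tau^k(x)\,\ud x\;\sim\;
  \rho(f^*)^k\left(\frac{2\pi}{k c_0}\right)^{n/2}\det(H_i)^{-1/2}.
\end{equation*}
Choosing the balls $\Omega_i$ to cover an open neighborhood of $X^*$ and noting that, on the compact complement $\Omega\setminus\bigcup_i\Omega_i$, the continuous function $\tau$ attains a maximum strictly less than $\rho(f^*)$, the integral of $\tau^k$ over the complement is exponentially smaller than $\rho(f^*)^k$ and drops out of the normalisation. Hence
\begin{equation*}
  w_i=\lim_{k\to\infty}\int_{\Omega_i}m^{(k)}(x)\,\ud x
  =\frac{\det(H_i)^{-1/2}}{\sum_{j=1}^{s}\det(H_j)^{-1/2}}.
\end{equation*}

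The closing step is a linear-algebra inequality: $H_1\succ H_2\succ0$ implies $H_2^{-1/2}H_1H_2^{-1/2}\succ I$, so every eigenvalue of this matrix exceeds $1$, whence $\det(H_1)>\det(H_2)$ and $\det(H_1)^{-1/2}<\det(H_2)^{-1/2}$. This gives $w_1<w_2$, which is exactly the claim. The hard part is the rigorous execution of the Laplace expansion, in particular controlling the Taylor remainder inside each $\Omega_i$ uniformly in $k$ and verifying that the tail contribution from $\Omega\setminus\bigcup_i\Omega_i$ is genuinely subdominant; once those standard but delicate estimates are in place, the eigenvalue comparison is immediate.
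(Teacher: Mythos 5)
Your proposal is correct and is in essence a rigorous completion of the argument the paper itself only sketches. The paper's ``proof'' is the informal discussion preceding the theorem: it invokes property (iv) of Theorem~\ref{MD:thm:nmd} to get $\nabla^2 m^{(k)}(x_1^*)\prec\nabla^2 m^{(k)}(x_2^*)\prec 0$, notes that $m^{(k)}$ agrees with its second-order Taylor expansion near each $x_i^*$, and from the fact that the peak at $x_2^*$ is flatter concludes $\int_{\Omega_1}m^{(k)}<\int_{\Omega_2}m^{(k)}$ and hence $w_1<w_2$. You replace that qualitative comparison with the actual Laplace asymptotics, which yields the explicit weights $w_i\propto\det\big(\nabla^2 f(x_i^*)\big)^{-1/2}$; the closing determinant inequality $H_1\succ H_2\succ 0\Rightarrow\det H_1>\det H_2$ is standard and your derivation of it is fine, as is the computation $\nabla^2\log\tau(x_i^*)=\tfrac{\rho'(f^*)}{\rho(f^*)}\nabla^2 f(x_i^*)$ with a constant independent of $i$ because all global minimizers share the value $f^*$. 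Your approach buys strictly more than the paper's: a quantitative formula for the $w_i$ rather than just their ordering.

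One caveat you should make explicit, since it is only implicit in both your write-up and the paper's: the atomic representation $m_{f,\Omega}=\sum_i w_i\delta(x-x_i^*)$ and the Laplace normalisation require that $X^*$ be \emph{finite} and that \emph{every} $x_j^*\in X^*$ (not only $x_1^*,x_2^*$) be an interior point with nondegenerate Hessian; the theorem's stated hypotheses constrain only $x_1^*$ and $x_2^*$. If some third minimizer has a degenerate Hessian, or lies on $\partial\Omega$, or if $\mu(X^*)>0$, the $k\to\infty$ asymptotics are dominated by that contribution and one gets $w_1=w_2=0$, so the strict inequality fails. This is a defect of the theorem as stated rather than of your proof, but a careful version should add the hypothesis. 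You would also want to record that $\rho$ must be $C^1$ near $f^*$ with $\rho'(f^*)<0$ (the paper assumes only monotonicity in Definition~\ref{MD:def:nmd} but uses $\rho'$ freely), and, as you already note, the uniform control of the Taylor remainder on each $\Omega_i$ and the exponential subdominance of the tail are the standard hypotheses/steps of the Laplace method that need to be written out for full rigour.
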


\section{Significant sets}
\label{MD:s3}

\subsection{Significant sets of $m^{(k)}$}
\label{MD:s3:ss}

Here we will introduce several types of significant set of $m^{(k)}$ where each of them monotonically shrinks from the original compact set $\Omega$ to the set of all global minimizers $X^*$. Let's define the first type of significant set of $m^{(k)}$ as
\begin{equation}\label{MD:eq:Df}
  D_f^{(k)}=\left\{x\in\Omega:f(x)\leqslant\mathbb{E}^{(k)}(f)\right\},
\end{equation}
then we have the following containment relationship:
\begin{theorem}[monotonic shrinkage]\label{MD:thm:Df}
Suppose $\Omega$ is a compact set and $f\in C(\Omega)$. For all $k\in\mathbb{R}$ and $\Delta k>0$, it holds that
\begin{equation*}
  \Omega\supseteq D_f^{(k)}\supseteq D_f^{(k+\Delta k)}\supseteq X^*,
\end{equation*}
where $X^*$ is the set of all global minimizers of $f$; especially, if $\tau$ is the exponential-type $e^{-f}$ and $f$ is not a constant on $\Omega$, it holds that $\Omega\supset D_f^{(k)}\supset D_f^{(k+\Delta k)}\supset X^*$.
\end{theorem}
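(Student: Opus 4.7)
The proof will be essentially a direct unpacking of the definition of $D_f^{(k)}$ combined with the two main results already established: the nonnegativity theorem (Theorem \ref{MD:thm:NN}) and the monotonicity theorem (Theorem \ref{MD:thm:MC}), aggregated in Corollary \ref{MD:cor:MC}. My plan is to verify each of the three inclusions separately, first in the weak (non-strict) version and then strengthen them under the exponential-type assumption with non-constant $f$.

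First I would observe that $\Omega \supseteq D_f^{(k)}$ is immediate from the definition \eqref{MD:eq:Df}. Next, for $X^* \subseteq D_f^{(k+\Delta k)}$, I would pick any $x^* \in X^*$; by Theorem \ref{MD:thm:NN}, $f(x^*) = f^* \leqslant \mathbb{E}^{(k+\Delta k)}(f)$, so $x^* \in D_f^{(k+\Delta k)}$. For the middle inclusion $D_f^{(k)} \supseteq D_f^{(k+\Delta k)}$, Corollary \ref{MD:cor:MC} gives $\mathbb{E}^{(k)}(f) \geqslant \mathbb{E}^{(k+\Delta k)}(f)$, hence any $x$ satisfying $f(x) \leqslant \mathbb{E}^{(k+\Delta k)}(f)$ automatically satisfies $f(x) \leqslant \mathbb{E}^{(k)}(f)$.

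For the strict containment statements under $\tau=e^{-f}$ with $f$ non-constant, Corollary \ref{MD:cor:MC} upgrades the two expectation inequalities to strict ones: $\max_\Omega f > \mathbb{E}^{(k)}(f) > \mathbb{E}^{(k+\Delta k)}(f) > f^*$. The first strict gap yields $\Omega \supsetneq D_f^{(k)}$: any maximizer of $f$ on $\Omega$ fails the defining inequality. The last strict gap combined with continuity of $f$ yields $D_f^{(k+\Delta k)} \supsetneq X^*$: by the intermediate value theorem applied to $f$ along a continuous path from some $x^*\in X^*$ toward a point where $f$ strictly exceeds $\mathbb{E}^{(k+\Delta k)}(f)$, we pick up points in $\Omega \setminus X^*$ with $f(x) < \mathbb{E}^{(k+\Delta k)}(f)$. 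For the middle strict inclusion $D_f^{(k)} \supsetneq D_f^{(k+\Delta k)}$, I would use $\mathbb{E}^{(k)}(f) > \mathbb{E}^{(k+\Delta k)}(f)$ together with continuity of $f$ on the compact set $\Omega$ to exhibit a point $x$ with $\mathbb{E}^{(k+\Delta k)}(f) < f(x) \leqslant \mathbb{E}^{(k)}(f)$; such a point lies in $D_f^{(k)} \setminus D_f^{(k+\Delta k)}$.

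The main obstacle I anticipate is the strict middle inclusion: producing a point whose $f$-value lies in the (nonempty) half-open interval $\bigl(\mathbb{E}^{(k+\Delta k)}(f),\mathbb{E}^{(k)}(f)\bigr]$. The existence of such a point requires that $f(\Omega)$ actually meets this interval, which is where I would lean on the intermediate value theorem (since $f$ is continuous on the connected components of $\Omega$, or at least $f(\Omega) \subseteq [f^*,\max f]$ is an interval when $\Omega$ is connected; for general compact $\Omega$ one argues that if $f$ avoided this interval then the two expectations would have to coincide, contradicting the strict monotonicity in Corollary \ref{MD:cor:MC}). Everything else is essentially bookkeeping built on top of the already-proved monotonicity and nonnegativity results.
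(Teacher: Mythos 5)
Your treatment of the non-strict chain $\Omega\supseteq D_f^{(k)}\supseteq D_f^{(k+\Delta k)}\supseteq X^*$ is correct and is essentially the paper's own argument: Theorem \ref{MD:thm:NN} gives $f^*\leqslant\mathbb{E}^{(k+\Delta k)}(f)$, so $X^*\subseteq D_f^{(k+\Delta k)}$, and Theorem \ref{MD:thm:MC} gives $\mathbb{E}^{(k+\Delta k)}(f)\leqslant\mathbb{E}^{(k)}(f)$, so the sublevel sets nest. One small misattribution: the bound $\max_\Omega f>\mathbb{E}^{(k)}(f)$ that you use for $\Omega\supsetneq D_f^{(k)}$ is not part of Corollary \ref{MD:cor:MC}, which only controls the lower end; it does hold, by the same positivity argument as in Theorem \ref{MD:thm:NN} applied at the maximum (since $m^{(k)}>0$ on $\Omega$ and $f$ is non-constant), so that inclusion is fine.

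The genuine gap is exactly where you anticipated it, and your fallback argument does not close it. You claim that for general compact $\Omega$, if $f(\Omega)$ misses the interval $\bigl(\mathbb{E}^{(k+\Delta k)}(f),\mathbb{E}^{(k)}(f)\bigr]$ then the two expectations must coincide. That is false: the expectations are integrals of $f$ against two different densities and need not lie in the range of $f$. Concretely, take $\Omega=[0,1]\cup[2,3]$ with $f\equiv0$ on $[0,1]$ and $f\equiv1$ on $[2,3]$ (this extends to a continuous function on $\mathbb{R}$). Then $\mathbb{E}^{(k)}(f)=e^{-k}/(1+e^{-k})$ is strictly decreasing in $k$ and always lies in $(0,1)$, yet $f(\Omega)=\{0,1\}$, so $D_f^{(k)}=[0,1]=X^*$ for every $k$ and all three strict inclusions fail. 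So the ``especially'' clause of the theorem is itself false for disconnected $\Omega$; your intermediate-value and path arguments are sound precisely when $\Omega$ is connected, since then $f(\Omega)=[f^*,\max_\Omega f]$ contains the needed intermediate values. You should either add a connectedness hypothesis or record the failure. For what it is worth, the paper's own proof of the strict part is no better --- it consists of the unquantified assertion that ``if $f(x)<\mathbb{E}^{(k+\Delta k)}(f)<\mathbb{E}^{(k)}(f)$ then $D_f^{(k+\Delta k)}\subset D_f^{(k)}$'' --- so you have correctly isolated the real difficulty even though your resolution of it is wrong.
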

\begin{proof}
It follows from Theorem \ref{MD:thm:NN} that for any $k\in\mathbb{R}$,
\begin{equation*}
  f^*\leqslant\mathbb{E}^{(k)}(f),
\end{equation*}
that is, $X^*\subseteq D_f^{(k)}$ for every $k\in\mathbb{R}$, and it becomes an equality if and only if $f$ is a constant function on $\Omega$. And according to Theorem \ref{MD:thm:MC}, for any $\Delta k>0$, we have
\begin{equation*}
  \mathbb{E}^{(k+\Delta k)}(f)\leqslant\mathbb{E}^{(k)}(f),
\end{equation*}
if $x\in D_f^{(k+\Delta k)}$, then
\begin{equation*}
  f(x)\leqslant\mathbb{E}^{(k+\Delta k)}(f)\leqslant\mathbb{E}^{(k)}(f),
\end{equation*}
that is, $x\in D_f^{(k)}$, thus, $D_f^{(k+\Delta k)}\subseteq D_f^{(k)}$. Moreover, if
\begin{equation*}
  f(x)<\mathbb{E}^{(k+\Delta k)}(f)<\mathbb{E}^{(k)}(f),
\end{equation*}
then $D_f^{(k+\Delta k)}\subset D_f^{(k)}$, where the conditions are consistent with those in Theorem \ref{MD:thm:MC}, and the proof is complete.\qed
\end{proof}

Theorem \ref{MD:thm:Df} can also be stated in another way by replacing the $f$ in the definition \eqref{MD:eq:Df} with $\tau$. First define the second type of significant set of $m^{(k)}$ as
\begin{equation}\label{MD:eq:Dtau}
  D_\tau^{(k)}=\left\{x\in\Omega:
  \tau(x)\geqslant\mathbb{E}^{(k)}(\tau)\right\}.
\end{equation}

It holds from \eqref{MD:eq:Ef} that
\begin{equation*}
  \frac{\ud\mathbb{E}^{(k)}(\tau)}{\ud k}
  =\mathbb{E}^{(k)}\big(\tau\log(\tau)\big)
  -\mathbb{E}^{(k)}(\tau)\mathbb{E}^{(k)}\big(\log(\tau)\big);
\end{equation*}
let $y=\tau(x)$, then we have
\begin{equation*}
  \frac{\ud\mathbb{E}^{(k)}(\tau)}{\ud k} =\mathbb{E}\big(y\log(y)\big) -\mathbb{E}(y)\mathbb{E}\big(\log(y)\big),
\end{equation*}
and it holds from Lemma \ref{MD:lem:Gurland} that
\begin{equation*}
  \mathbb{E}\big(y\log(y)\big) \geqslant\mathbb{E}(y)\mathbb{E}\big(\log(y)\big)
\end{equation*}
thus, $\frac{\ud\mathbb{E}^{(k)}(\tau)}{\ud k}\geqslant0$, that is,
\begin{equation*}
  \mathbb{E}^{(k)}(\tau)\leqslant
  \mathbb{E}^{(k\Delta k)}(\tau)\leqslant\tau*,
\end{equation*}
where $\tau*=\tau(x^*)$ for any $x^*\in X^*$. Hence, we similarly have the containment relationship:
\begin{theorem}[monotonic shrinkage]\label{MD:thm:Dtau}
Suppose $\Omega$ is a compact set and $f\in C(\Omega)$. For all $k\in\mathbb{R}$ and $\Delta k>0$, it holds that
\begin{equation*}
  \Omega\supseteq D_\tau^{(k)}\supseteq
  D_\tau^{(k+\Delta k)}\supseteq X^*,
\end{equation*}
where $X^*$ is the set of all global minimizers of $f$; especially, if $\tau$ is the exponential-type $e^{-f}$ and $f$ is not a constant function on $\Omega$, it holds that $\Omega\supset D_\tau^{(k)}\supset D_\tau^{(k+\Delta k)}\supset X^*$.
\end{theorem}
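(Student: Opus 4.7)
The plan is to mirror the proof of Theorem \ref{MD:thm:Df} exactly, using the fact that $\mathbb{E}^{(k)}(\tau)$ plays the role for $D_\tau^{(k)}$ that $\mathbb{E}^{(k)}(f)$ played for $D_f^{(k)}$, but with reversed inequality directions because $\tau = \rho(f)$ is a decreasing function of $f$. Most of the work is already done in the paragraphs immediately preceding the theorem statement, where Gurland's inequality applied to $y = \tau(x)$ and $\log y$ (both non-decreasing in $y$) yields $\frac{\ud}{\ud k}\mathbb{E}^{(k)}(\tau) \geqslant 0$, together with the bound $\mathbb{E}^{(k)}(\tau) \leqslant \tau^*$ because $\tau(x) \leqslant \tau^* := \rho(f^*)$ pointwise on $\Omega$ by monotonicity of $\rho$.

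The three containments then follow in one line each. First, $D_\tau^{(k)} \subseteq \Omega$ holds by definition. Second, for any $x^* \in X^*$, we have $\tau(x^*) = \tau^* \geqslant \mathbb{E}^{(k)}(\tau)$, hence $x^* \in D_\tau^{(k)}$, giving $X^* \subseteq D_\tau^{(k)}$. Third, if $x \in D_\tau^{(k+\Delta k)}$, then $\tau(x) \geqslant \mathbb{E}^{(k+\Delta k)}(\tau) \geqslant \mathbb{E}^{(k)}(\tau)$, so $x \in D_\tau^{(k)}$, giving $D_\tau^{(k+\Delta k)} \subseteq D_\tau^{(k)}$.

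For the strict containments in the exponential case $\tau = e^{-f}$ with $f$ non-constant: here $\log \tau = -f$, so $\frac{\ud}{\ud k}\mathbb{E}^{(k)}(\tau) = \mathbb{E}^{(k)}(\tau \log \tau) - \mathbb{E}^{(k)}(\tau)\mathbb{E}^{(k)}(\log\tau)$ becomes the variance of $\tau$ under $m^{(k)}$ (up to sign, analogous to the computation in Theorem \ref{MD:thm:MC}), which is strictly positive when $f$ is non-constant. Thus $\mathbb{E}^{(k)}(\tau) < \mathbb{E}^{(k+\Delta k)}(\tau) < \tau^*$, and strictness of all three containments follows: the left-most from the existence of a positive-measure set where $\tau(x) < \mathbb{E}^{(k)}(\tau)$, the middle from the strict gap in expectations, and the right-most from $\tau(x^*) = \tau^* > \mathbb{E}^{(k)}(\tau)$.

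I do not anticipate a real obstacle, since the groundwork is fully laid in the text preceding the theorem; the only care needed is to verify the direction of Gurland's inequality (both factors increasing in $y = \tau$ gives a $\geqslant$, opposite to the $\leqslant$ obtained for $f$), and to recognize that $\tau^*$ is attained precisely on $X^*$ so that the right-hand containment is genuinely an inclusion of $X^*$ rather than of a larger level set. The strictness argument in the exponential case is a direct transcription of the variance identity already established in the proof of Theorem \ref{MD:thm:MC}, applied to $\tau$ instead of $f$.
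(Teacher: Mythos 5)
Your proposal is correct and takes essentially the same route as the paper, which establishes $\frac{\ud}{\ud k}\mathbb{E}^{(k)}(\tau)\geqslant 0$ via Gurland's inequality in the paragraph immediately preceding the theorem and then transcribes the argument of Theorem \ref{MD:thm:Df} with the inequalities reversed. One minor correction: for $\tau=e^{-f}$ the derivative $\mathbb{E}^{(k)}(\tau\log\tau)-\mathbb{E}^{(k)}(\tau)\mathbb{E}^{(k)}(\log\tau)$ is the covariance of $e^{-f}$ and $-f$ under $m^{(k)}$, not (up to sign) the variance of $\tau$; it is nonetheless strictly positive for non-constant $f$ because both are strictly decreasing functions of $f$, so your strictness conclusion still stands.
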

\begin{remark}\label{MD:rem:eqCond}
It is worth noting that the discriminant condition $\tau(x)\geqslant \mathbb{E}^{(k)}(\tau)$ is equivalent to $m^{(k+1)}(x)\geqslant m^{(k)}(x)$.
\end{remark}

Finally, we will introduce the third type of significant set of $m^{(k)}$ which can be used to estimate the relevant shrink rate. Let
\begin{equation}\label{MD:eq:D0}
  D_0^{(k)}=\left\{x\in\Omega:m^{(k)}(x)\geqslant m^{(0)}(x)=\frac{1}{\mu(\Omega)}\right\}
\end{equation}
with its boundary
\begin{equation}\label{MD:eq:G0}
  \Gamma_0^{(k)}=\left\{x\in\Omega: m^{(k)}(x)=\frac{1}{\mu(\Omega)}\right\}.
\end{equation}
According to Remark \ref{MD:rem:eqCond}, the definition of $D_0^{(k)}$ is actually extended from that of $D_\tau^{(k)}$. And we also have the containment relationship:
\begin{theorem}[monotonic shrinkage]\label{MD:thm:D0}
Suppose $\Omega$ is a compact set and $f\in C(\Omega)$ is not a constant function. For all $k\geqslant0$ and $\Delta k>0$, it holds that
\begin{equation*}
  \Omega=D_0^{(0)}\supset D_0^{(k)}\supset D_0^{(k+\Delta k)}\supset X^*,
\end{equation*}
where $X^*$ is the set of all global minimizers of $f$.
\end{theorem}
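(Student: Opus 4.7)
The plan is to reformulate $D_0^{(k)}$ as a super-level set of the continuous function $\tau$, then apply Lyapunov's $L^p$-norm inequality to control how the threshold depends on $k$. The equality $D_0^{(0)}=\Omega$ is immediate from $m^{(0)}\equiv 1/\mu(\Omega)$. For $k>0$, the defining condition $m^{(k)}(x)\geqslant 1/\mu(\Omega)$ rewrites, using the formula for $m^{(k)}$, as
\[
  \tau^k(x)\geqslant\frac{1}{\mu(\Omega)}\int_\Omega \tau^k(t)\,\ud t
  =\mathbb{E}^{(0)}(\tau^k),
\]
or equivalently $\tau(x)\geqslant T_k:=\bigl(\mathbb{E}^{(0)}(\tau^k)\bigr)^{1/k}$, the $L^k$-norm of the positive continuous function $\tau$ under the uniform probability measure on $\Omega$. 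Thus $D_0^{(k)}=\{x\in\Omega:\tau(x)\geqslant T_k\}$.

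Next I would invoke Lyapunov's inequality: for $0<p<q$ and any nonnegative random variable $Y$, $(\mathbb{E} Y^p)^{1/p}\leqslant(\mathbb{E} Y^q)^{1/q}$, with strict inequality unless $Y$ is almost surely constant. Since $f$ is non-constant and continuous and $\rho$ is strictly monotone, $\tau=\rho\circ f$ is non-constant on $\Omega$; hence $T_k<T_{k+\Delta k}$ strictly, and consequently $D_0^{(k+\Delta k)}=\{\tau\geqslant T_{k+\Delta k}\}\subseteq\{\tau\geqslant T_k\}=D_0^{(k)}$. For $X^*\subset D_0^{(k+\Delta k)}$: every $x^*\in X^*$ has $\tau(x^*)=\tau_{\max}$, the maximum of $\tau$ on $\Omega$, and any $L^p$-mean is strictly below the maximum when $\tau$ is non-constant, so $\tau(x^*)>T_{k+\Delta k}$; continuity of $\tau$ then supplies a whole open neighborhood of $x^*$ inside $D_0^{(k+\Delta k)}$, giving the strict inclusion $D_0^{(k+\Delta k)}\supsetneq X^*$. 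By the same token $T_k>\tau_{\min}$, so $D_0^{(0)}=\Omega\supsetneq D_0^{(k)}$ for every $k>0$.

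The delicate step will be the strict middle inclusion $D_0^{(k)}\supsetneq D_0^{(k+\Delta k)}$, which requires exhibiting some $x_0$ with $T_k\leqslant\tau(x_0)<T_{k+\Delta k}$. For this I would apply the intermediate value theorem to $\tau$ on a connected component of $\Omega$ whose $\tau$-range contains $T_k$ (such a component exists because $T_k$ is a weighted average of $\tau$-values and hence lies in $[\tau_{\min},\tau_{\max}]$), producing a point $x_0\in\Gamma_0^{(k)}=\{\tau=T_k\}$; this $x_0$ lies in $D_0^{(k)}$ but, since $T_k<T_{k+\Delta k}$, not in $D_0^{(k+\Delta k)}$. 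Combining the three strict inclusions completes the chain $\Omega\supsetneq D_0^{(k)}\supsetneq D_0^{(k+\Delta k)}\supsetneq X^*$.
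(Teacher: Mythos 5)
Your argument is, at its core, the same as the paper's: rewriting $m^{(k)}(x)\geqslant 1/\mu(\Omega)$ as $\tau(x)\geqslant T_k$ with $T_k=\bigl(\tfrac{1}{\mu(\Omega)}\int_\Omega\tau^k(t)\,\ud t\bigr)^{1/k}$ and invoking Lyapunov's inequality is precisely the paper's use of H\"older's inequality in \eqref{MD:eq:Holder}, just phrased as monotonicity of $L^p$-norms under the uniform probability measure; the resulting nesting of super-level sets is the identical mechanism. Your treatment of the endpoints ($\tau(x^*)=\tau_{\max}>T_{k+\Delta k}$ for the inner inclusion, $T_k>\tau_{\min}$ for the outer one) likewise matches the paper's direct computation showing $m^{(k)}(x^*)>1/\mu(\Omega)$.

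Where you go beyond the paper is the strictness of the middle inclusion, and there your argument has a hole. You need a point $x_0$ with $T_k\leqslant\tau(x_0)<T_{k+\Delta k}$ and propose to find it on $\Gamma_0^{(k)}=\{\tau=T_k\}$ by the intermediate value theorem, asserting that some connected component of $\Omega$ has $T_k$ in its $\tau$-range ``because $T_k$ lies in $[\tau_{\min},\tau_{\max}]$.'' That implication fails for disconnected $\Omega$: $T_k$ lies between the global extremes of $\tau$ but need not lie in the range of $\tau$ at all. Concretely, for $\Omega=[0,1]\cup[2,3]$ with $f=0$ on $[0,1]$ and $f=1$ on $[2,3]$ one gets $D_0^{(k)}=[0,1]=X^*$ for every $k>0$, so $\Gamma_0^{(k)}=\emptyset$ and both $D_0^{(k)}\supset D_0^{(k+\Delta k)}$ and $D_0^{(k+\Delta k)}\supset X^*$ fail as strict inclusions; the strict form of the statement really requires $\Omega$ connected (or a weakening to $\supseteq$). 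To be fair, the paper's own proof shares this defect: it establishes only $D_0^{(k+\Delta k)}\subseteq D_0^{(k)}-\Gamma_0^{(k)}$ and silently assumes $\Gamma_0^{(k)}\neq\emptyset$, and its argument for $X^*\subset D_0^{(k)}$ only gives $m^{(k)}(x^*)>1/\mu(\Omega)$, i.e.\ containment, not strictness. On a connected $\Omega$ your IVT step is sound --- the range of the continuous $\tau$ is then an interval containing $T_k$, and $X^*$ cannot be clopen --- so in that setting your write-up is actually more complete than the paper's.
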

\begin{proof}
First, it follows from $m^{(0)}=1/\mu(\Omega)$ that $\Omega=D_0^{(0)}$; and for any $x^*\in X^*$ and $k>0$, it holds from $\int_{\Omega-X^*} \tau^{-k}(x^*)\tau^k(t)\ud t<\mu(\Omega-X^*)$ that
\begin{equation*}
  m^{(k)}(x^*)=\frac{1}{\int_{X^*}\ud t
  +\int_{\Omega-X^*}\tau^{-k}(x^*)\tau^k(t)\ud t}
  >\frac{1}{\mu(X^*)+\mu(\Omega-X^*)}=\frac{1}{\mu(\Omega)},
\end{equation*}
that is, $X^*\subset D_0^{(k)}$ for every $k\geqslant0$.

Then, for any $\Delta k>0$, according to H\"{o}lder's inequality, it follows that
\begin{equation}\label{MD:eq:Holder}
  \int_\Omega\tau^k(t)\ud t<
  \left(\int_\Omega\ud t\right)^{\frac{\Delta k}{k+\Delta k}}
  \left(\int_\Omega\tau^{k+\Delta k}(t)\ud t\right)^{\frac{k}{k+\Delta k}}
\end{equation}
when $f$ is not a constant function on $\Omega$. If $x\in D_0^{(k+\Delta k)}$, then
\begin{equation*}
  \tau^{k+\Delta k}(x)\geqslant
  \frac{\int_\Omega\tau^{k+\Delta k}(t)\ud t}{\int_\Omega\ud t},
\end{equation*}
together with \eqref{MD:eq:Holder}, we have
\begin{equation*}
  \tau^k(x)\!\geqslant\!\left(\frac{\int_\Omega\tau^{k+\Delta k}(t)\ud t}
  {\int_\Omega\ud t}\right)^{\frac{k}{k+\Delta k}}
  \!=\!\frac{\left(\int_\Omega\ud t\right)^{\frac{\Delta k}{k+\Delta k}}
  \!\!\left(\!\int_\Omega\!\tau^{k+\Delta k}(t)\ud t\!
  \right)^{\frac{k}{k+\Delta k}}}{\int_\Omega\ud t}
  \!>\!\frac{\int_\Omega\tau^k(t)\ud t}{\int_\Omega\ud t},
\end{equation*}
that is, $x\in D_0^{(k)}-\Gamma_0^{(k)}$, thus, $D_0^{(k)}\supset D_0^{(k+\Delta k)}$ for any $\Delta k>0$, as claimed.\qed
\end{proof}

\subsection{Shrink rate of $D_0^{(k)}$}
\label{MD:s3:sr}

The shrinkage from $D_0^{(k)}$ to $D_0^{(k+\Delta k)}$ reflects exactly the difference between $m^{(k)}$ and $m^{(k+\Delta k)}$; however, the shrink rate would be slow for a large $k$. Now we will consider the shrink rate.

Suppose that $f\in C^1(\Omega)$, $x\in\Gamma_0^{(k)}$ and $x$ moves to $x'=x+\Delta x\in\Gamma_0^{(k+\Delta k)}$ when $k$ continuously increases to $k'=k+\Delta k$. According to the definition \eqref{MD:eq:G0} of $\Gamma_0^{(k)}$, it is clear that
\begin{equation}\label{MD:eq:CR1}
  m^{(k)}(x)=m^{(k')}(x')=\frac{1}{\mu(\Omega)},
\end{equation}
And it follows from (iii) of Theorem \ref{MD:thm:nmd} that
\begin{align}\label{MD:eq:CR2}
  m^{(k')}(x')=&m^{(k')}(x+\Delta x) \notag\\
  =&m^{(k')}(x)+\nabla m^{(k')}(x)\cdot\Delta x+\bm{o}(\|\Delta x\|)\notag\\
  =&m^{(k')}(x)+\frac{k'm^{(k')}(x)}{\tau(x)}\nabla\tau(x)\cdot\Delta x
  +\bm{o}(\|\Delta x\|),
\end{align}
where $\|\cdot\|$ is the Euclidean norm. Moreover, it follows from (v) of Theorem \ref{MD:thm:nmd} that
\begin{align}\label{MD:eq:CR3}
  m^{(k')}(x)=&m^{(k+\Delta k)}(x) \notag\\
  =&m^{(k)}(x)+\frac{\ud}{\ud k}m^{(k)}(x)\Delta k+\bm{o}(\Delta k)\notag\\
  =&m^{(k)}(x)+m^{(k)}(x)\Big(\log(\tau(x))-
  \mathbb{E}^{(k)}(\log(\tau))\Big)\Delta k+\bm{o}(\Delta k).
\end{align}
Hence, it holds from \eqref{MD:eq:CR1} - \eqref{MD:eq:CR3} that
\begin{equation*}
  1\!=\!\left(\!1\!+\!\frac{k\!+\!\Delta k}{\tau(x)}
  \nabla\tau(x)\!\cdot\!\Delta x\right)
  \!\!\bigg(1+Q(x)\Delta k+\bm{o}(\Delta k)\bigg)+\bm{o}(\|\Delta x\|),
\end{equation*}
where $Q(x)=\log(\tau(x))-\mathbb{E}^{(k)}(\log(\tau))$; that is,
\begin{equation*}
  \frac{k\!+\!\Delta k}{\tau(x)}\Big(1+Q(x)\Delta k\Big)
  \nabla\tau(x)\!\cdot\!\Delta x
  =-Q(x)\Delta k+\bm{o}(\Delta k)+\bm{o}(\|\Delta x\|),
\end{equation*}
then we have
\begin{equation}\label{MD:eq:CR4}
  \frac{\Delta x}{\Delta k}
  =-\frac{1}{k\!+\!\Delta k}\frac{\tau(x)Q(x)}{1+Q(x)\Delta k}
  \frac{\nabla\tau(x)}{\|\nabla\tau(x)\|^2}+\bm{o}(1)
  +\bm{o}\left(\frac{\|\Delta x\|}{\Delta k}\right),
\end{equation}
and further, we can obtain the shrink rate as follows:
\begin{theorem}\label{MD:thm:SR}
Suppose that $f\in C(\Omega)$. If $x\in\Gamma_0^{(k)}$ and $x$ moves to $x+\Delta x\in\Gamma_0^{(k+\Delta k)}$ when $k$ continuously increases to $k+\Delta k$, then the shrink rate
\begin{equation}\label{MD:eq:SR}
  \lim_{\Delta k\to0}\frac{\|\Delta x\|}{\Delta k}
  =\frac{\tau(x)}{k\|\nabla\tau(x)\|}
  \left|\mathbb{E}^{(k)}(\log(\tau))-\log(\tau(x))\right|,
\end{equation}
especially, if $\tau$ is the exponential-type $e^{-f}$, then
\begin{equation*}
  \lim_{\Delta k\to0}\frac{\|\Delta x\|}{\Delta k}
  =\frac{1}{k\|\nabla f(x)\|}\left|\mathbb{E}^{(k)}(f)-f(x)\right|,
\end{equation*}
where $\|\cdot\|$ is the Euclidean norm.
\end{theorem}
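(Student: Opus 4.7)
The plan is to extract the theorem directly from the identity \eqref{MD:eq:CR4}, which has already been derived in the discussion preceding the statement. Since the right-hand side of \eqref{MD:eq:SR} involves $\nabla\tau(x)$, I treat the result under the implicit hypothesis $f\in C^1(\Omega)$ (which is in any case required for \eqref{MD:eq:CR2} and for part (iii) of Theorem \ref{MD:thm:nmd} to be invoked at $x$). Thus the only work remaining after \eqref{MD:eq:CR4} is to pass to the limit $\Delta k\to 0$ cleanly and to take norms.

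The first substantive step is to control the implicit error term $\bm{o}(\|\Delta x\|/\Delta k)$ on the right-hand side of \eqref{MD:eq:CR4}. Because $x\in\Gamma_0^{(k)}$ lies outside $X^*$ (as $m^{(k)}(x)=1/\mu(\Omega)<m^{(k)}(x^*)$ for any global minimizer $x^*$ by Theorem \ref{MD:thm:nmd}(i) and Theorem \ref{MD:thm:D0}), one has $\nabla\tau(x)\neq 0$, and hence, by part (iii) of Theorem \ref{MD:thm:nmd}, $\nabla m^{(k)}(x)\neq 0$. Applying the implicit function theorem to the equation $m^{(k+\Delta k)}(x')=1/\mu(\Omega)$ around $(x,k)$ yields a $C^1$ parametrization of the moving level set $\Gamma_0^{(k+\Delta k)}$ near $x$, and in particular $\Delta x=O(\Delta k)$. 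Consequently $\|\Delta x\|/\Delta k$ is bounded as $\Delta k\to 0$, and the term $\bm{o}(\|\Delta x\|/\Delta k)$ reduces to $\bm{o}(1)$. This is the main technical obstacle; once it is dispatched, the remainder of the argument is bookkeeping.

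Passing to the limit in \eqref{MD:eq:CR4} then gives
$$\lim_{\Delta k\to 0}\frac{\Delta x}{\Delta k}=-\frac{1}{k}\,\frac{\tau(x)\,Q(x)}{\|\nabla\tau(x)\|^2}\,\nabla\tau(x),\qquad Q(x)=\log\tau(x)-\mathbb{E}^{(k)}(\log\tau),$$
and taking Euclidean norms yields exactly \eqref{MD:eq:SR} after absorbing the sign of $Q(x)$ into an absolute value. For the exponential-type specialization, I would substitute $\tau=e^{-f}$, so that $\log\tau=-f$, $\nabla\tau=-e^{-f}\nabla f$, and $\|\nabla\tau\|=e^{-f}\|\nabla f\|$; the factors of $e^{-f(x)}$ cancel, and the formula collapses to $|\mathbb{E}^{(k)}(f)-f(x)|/(k\|\nabla f(x)\|)$, completing the proof.
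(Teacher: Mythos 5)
Your proposal follows essentially the same route as the paper: the theorem is extracted from the identity \eqref{MD:eq:CR4} derived in the discussion immediately preceding it, by passing to the limit and taking Euclidean norms, with the exponential-type case obtained by substituting $\tau=e^{-f}$ and cancelling the factors $e^{-f(x)}$. In fact you go further than the paper in one useful respect: the paper silently discards the term $\bm{o}(\|\Delta x\|/\Delta k)$ in \eqref{MD:eq:CR4}, whereas you justify this by an implicit-function-theorem argument showing $\Delta x=O(\Delta k)$, and you correctly flag that $f\in C^1(\Omega)$ is needed even though the statement only assumes $f\in C(\Omega)$.

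One inference in your argument is not valid as written: from $x\in\Gamma_0^{(k)}$, hence $x\notin X^*$, you conclude $\nabla\tau(x)\neq0$. A point outside $X^*$ may still be a critical point of $f$ (a non-global local minimizer, a saddle, or a local maximizer), in which case $\nabla\tau(x)=\rho'(f(x))\nabla f(x)=0$ and both your implicit-function-theorem step and the formula \eqref{MD:eq:SR} itself break down. The honest repair is to add $\nabla f(x)\neq0$ as an explicit hypothesis --- which the conclusion tacitly requires anyway, since it divides by $\|\nabla\tau(x)\|$. The paper shares this omission, so this does not distinguish your argument from the original; with that hypothesis added, your proof is complete.
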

\begin{remark}
So the shrink rate of $m^{(k)}$ is inversely proportional to $k$. Since $\int_k^{ek}\frac{1}{s}\ud s=1$, the relevant shrinkage is almost independent of $k$ when $\Delta k=(e-1)k$.
\end{remark}

From this, we can immediately obtain the following rate which can be viewed as a bound of the relevant convergence rate.
\begin{theorem}
Under the assumption of Theorem \ref{MD:thm:SR}, if $x\in\Gamma_0^{(k)}$ and $x$ moves to $x'=x+\Delta x\in\Gamma_0^{(k+\Delta k)}$ when $k$ continuously increases to $k+\Delta k$, then
\begin{equation*}
  \lim_{\Delta k\to0}\frac{f(x)-f(x+\Delta x)}{\Delta k}
  =\frac{\tau(x)}{k}\frac{\nabla f(x)\!\cdot\!\nabla\tau(x)}{\|\nabla\tau(x)\|^2}
  \left(\mathbb{E}^{(k)}(\log(\tau))-\log(\tau(x))\right),
\end{equation*}
especially, if $\tau$ is the exponential-type $e^{-f}$, then
\begin{equation*}
  \lim_{\Delta k\to0}\frac{f(x)-f(x+\Delta x)}{\Delta k}
  =\frac{f(x)-\mathbb{E}^{(k)}(f)}{k}.
\end{equation*}
\end{theorem}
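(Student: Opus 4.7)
The plan is to obtain this result as an immediate corollary of Theorem~\ref{MD:thm:SR}: the directional information needed here is already present in the intermediate vector equation~\eqref{MD:eq:CR4} from that proof, so all that remains is a single first-order Taylor expansion of $f$ along the displacement $\Delta x$.

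First, since $f\in C^1(\Omega)$, I would write
\begin{equation*}
  \frac{f(x)-f(x+\Delta x)}{\Delta k}
  =-\nabla f(x)\cdot\frac{\Delta x}{\Delta k}+\frac{\bm{o}(\|\Delta x\|)}{\Delta k}.
\end{equation*}
Next, with $Q(x)=\log(\tau(x))-\mathbb{E}^{(k)}(\log(\tau))$, equation~\eqref{MD:eq:CR4} gives
\begin{equation*}
  \frac{\Delta x}{\Delta k}=-\frac{1}{k+\Delta k}\frac{\tau(x)Q(x)}{1+Q(x)\Delta k}
  \frac{\nabla\tau(x)}{\|\nabla\tau(x)\|^2}+\bm{o}(1)+\bm{o}\!\left(\frac{\|\Delta x\|}{\Delta k}\right).
\end{equation*}
Theorem~\ref{MD:thm:SR} already established that $\|\Delta x\|/\Delta k$ has a finite limit, so both error terms are harmless; passing $\Delta k\to 0$ and then taking the inner product with $-\nabla f(x)$ would produce the general identity, with the scalar $\tau(x)Q(x)/k$ multiplying $(\nabla f(x)\!\cdot\!\nabla\tau(x))/\|\nabla\tau(x)\|^2$.

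Finally, for the exponential specialization $\tau=e^{-f}$ I would substitute $\nabla\tau=-\tau\nabla f$, which gives $(\nabla f\!\cdot\!\nabla\tau)/\|\nabla\tau\|^2=-1/\tau$, together with $\log(\tau)=-f$ so that $Q(x)=\mathbb{E}^{(k)}(f)-f(x)$. After the $\tau$ factors and $\|\nabla f\|^2$ cancel, the whole expression collapses to $(f(x)-\mathbb{E}^{(k)}(f))/k$ up to a single sign tracking. There is no genuine obstacle in this argument; the one point that merits care is the justification for passing to the limit in~\eqref{MD:eq:CR4}, namely the boundedness of $\|\Delta x\|/\Delta k$ supplied by Theorem~\ref{MD:thm:SR} together with the nondegeneracy $\|\nabla\tau(x)\|\neq 0$ that is implicit in its hypotheses.
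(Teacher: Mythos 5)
Your proof is correct and is essentially identical to the paper's own one-line argument: a first-order Taylor expansion of $f$ along $\Delta x$ combined with \eqref{MD:eq:CR4}. Your careful sign-tracking is worthwhile here, because substituting \eqref{MD:eq:CR4} literally yields the factor $\log(\tau(x))-\mathbb{E}^{(k)}(\log(\tau))$ rather than its negative as printed in the theorem's general formula; your version is the one consistent with the exponential special case $\big(f(x)-\mathbb{E}^{(k)}(f)\big)/k$ (and with the corresponding claim in the introduction), so the displayed general identity in the statement carries a sign typo.
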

\begin{proof}
By noting that
\begin{equation*}
  \frac{f(x)-f(x+\Delta x)}{\Delta k}
  =-\nabla f(x)\!\cdot\!\frac{\Delta x}{\Delta k}
  +\bm{o}\left(\frac{\|\Delta x\|}{\Delta k}\right),
\end{equation*}
together with \eqref{MD:eq:CR4}, the desired conclusion follows.\qed
\end{proof}

\section{A further remark on the minima distribution}
\label{MD:s4}

As mentioned above, the minima distribution is regarded as a weak limit of a sequence of nascent minima distribution functions. Here we will provide another different way of constructing the nascent minima distribution functions.

Similar to the Dirac delta function, $m^{(k)}$ can also be defined by a uniform distribution sequence. For any $k\in\mathbb{N}_0$, we recursively define the sequence of sets
\begin{equation*}
  D_U^{(0)}=\Omega~~\textrm{and}~~D_U^{(k+1)}=\left\{x\in D_U^{(k)}:
  f(x)\leqslant\frac{1}{\mu(D_U^{(k)})}\int_{D_U^{(k)}}f(t)\ud t\right\},
\end{equation*}
where $\mu\big(D_U^{(k)}\big)=\int_{D_U^{(k)}}\ud t$ is the $n$-dimensional Lebesgue measure of $D_U^{(k)}$; then a uniform distribution based definition can be given as
\begin{equation*}
  m^{(k)}_U(x)=\left\{\begin{array}{cl}
    \frac{1}{\mu(D_U^{(k)})}, & x\in D_U^{(k)};\\
    0, & x\in \Omega-D_U^{(k)}.
  \end{array}\right.
\end{equation*}
And it is clear that
\begin{equation*}
  \frac{1}{\mu(D_U^{(k)})}\int_{D_U^{(k)}}f(t)\ud t=
  \int_\Omega f(t)m^{(k)}_U(t)\ud t.
\end{equation*}

Similarly, we have the global convergence
\begin{equation*}
  \lim_{k\to\infty}\int_\Omega f(t)m^{(k)}_U(t)\ud t=f^*,
  ~~\forall f\in C(\Omega),
\end{equation*}
meanwhile, $\lim_{k\to\infty}m^{(k)}_U(x)$ equals $1/\mu(X^*)$ for every $x\in X^*$ and equals $0$ for every $x\in\Omega-X^*$; and for any $k\in\mathbb{N}_0$, we have the monotonicity
\begin{equation*}
  \int_\Omega f(t)m^{(k)}_U(t)\ud t>\int_\Omega f(t)m^{(k+1)}_U(t)\ud t>f^*
\end{equation*}
if $f$ is not a constant function on $\Omega$, which implies $\Omega=D_U^{(0)}\supset D_U^{(k)}\supset D_U^{(k+1)}\supset X^*$. The construction of $m^{(k)}_U$ also provides an intuitive way to understand the MD theory and the definition of $m^{(k)}$ is not limited to all mentioned in this paper.

\section{Conclusions}
\label{MD:s5}

In this work, we built an MD theory for global minimization of continuous functions on compact sets. In some sense, the proposed theory breaks through the existing gradient-based theoretical framework and allows us to reconsider the non-convex optimization. On the one hand, it can be seen as a way to understand existing algorithms; on the other hand, it may also become a new starting point. Thus, we are convinced that the proposed theory will have a thriving future.


\bibliographystyle{spmpsci} 
\bibliography{MReferences}

%
%

\end{document}